\theoremstyle{thmstyleone}%
\newtheorem{theorem}{Theorem}[section]
\newtheorem{corollary}[theorem]{Corollary}
\newtheorem{proposition}[theorem]{Proposition}%
\newtheorem{remark}[theorem]{Remark}
\newtheorem{definition}[theorem]{Definition}
\theoremstyle{thmstyletwo}%
\newtheorem{example}{Example}%
\theoremstyle{thmstylethree}%
\newcommand{\eqs}{\stackrel{{\rm sgn}}{=}}
\newcommand{\Fbar}{\overline{F}}
\newcommand{\Gbar}{\overline{G}}
\newcommand{\nG}{G_{\alpha,\beta,\theta}}
\newcommand{\nsG}{G_{\beta,\theta}}
\newcommand{\nsg}{g_{\beta,\theta}}
\newcommand{\nGs}{G_{\alpha_1,\beta_1,\theta_1}}
\newcommand{\nGbar}{\overline{G}_{\alpha,\beta,\theta}}
\newcommand{\nsGbar}{\overline{G}_{\beta,\theta}}
\newcommand{\cdfgll}{K_{\alpha,\beta,\theta}}
\newcommand{\cdfgllalt}{K_{\alpha_1,\beta_1,\theta_1}}
\newcommand{\leqc}{\leq_c}
\newcommand{\leqst}{\leq_{st}}
\newcommand{\leqhr}{\leq_{hr}}
\newcommand{\leqrhr}{\leq_{rh}}
\newcommand{\leqlr}{\leq_{lr}}
\newcommand{\geqc}{\geq_c}
\newcommand{\geqst}{\geq_{st}}
\newcommand{\geqhr}{\geq_{hr}}
\newcommand{\geqrhr}{\geq_{rh}}
\newcommand{\geqlr}{\geq_{lr}}
\newcommand{\IHR}{{\rm IHR}}
\newcommand{\DHR}{{\rm DHR}}
\newcommand{\IOR}{{\rm IOR}}
\newcommand{\DOR}{{\rm DOR}}
\newcommand{\GLL}{{\rm ELL}}
\newcommand{\correction}[2]{\textcolor{red}{#1}\textcolor{OliveGreen}{#2}}
\begin{document}

\title[Order and shape properties of a distorted proportional odds models]{Stochastic orders and shape properties for a new distorted proportional odds model}


\author[1]{\fnm{Idir} \sur{Arab}}\email{idir.bhh@gmail.com}
\equalcont{These authors contributed equally to this work.}

\author[2]{\fnm{Milto} \sur{Hadjikyriakou}}\email{MHadjikyriakou@uclan.ac.uk}
\equalcont{These authors contributed equally to this work.}

\author*[1]{\fnm{Paulo Eduardo} \sur{Oliveira}}\email{paulo@mat.uc.pt}

\affil*[1]{\orgdiv{Department of Mathematics}, \orgname{University of Coimbra}, \country{Portugal}}

\affil[2]{\orgdiv{School of Sciences}, \orgname{University of Central Lancashire}, \country{Cyprus}}



\abstract{Building on recent developments in models focused on the shape properties of odds ratios, this paper introduces two new models that expand the class of available distributions while preserving specific shape characteristics of an underlying baseline distribution. The first model offers enhanced control over odds and log-odds functions, facilitating adjustments to skewness, tail behaviour, and hazard rates. The second model, with even greater flexibility, describes odds ratios as quantile distortions. This approach leads to an enlarged log-logistic family capable of capturing these quantile transformations and diverse hazard behaviours, including non-monotonic and bathtub-shaped rates. Central to our study are the shape relations described through stochastic orders; we establish conditions that ensure stochastic ordering both within each family and across models under various ordering concepts, such as hazard rate, likelihood ratio, and convex transform orders.
}

\keywords{Stochastic order, Odds ratio, Log-logistic, Distortion}


\pacs[MSC Classification]{60E15, 60E05, 62E10}

\maketitle

\section{Introduction}
The development of flexible distribution models is a key pursuit in statistical research, particularly in applications that require detailed control over distributional shape properties, such as survival analysis, reliability engineering, and actuarial science.
{Naturally, the main goal is to introduce families of distributions allowing for fine tuning of some specific characteristic of interest. This is traditionally achieved by adding parameters to established families of distributions or by transforming characterising functionals, such as survival or hazard functions.}
Some recent examples of this approach are \cite{alzaa2013}, \cite{gauss2021} or \cite{gauss2024}{, focusing in practical statistical properties}.
Recent advancements have emphasized extending these frameworks to odds functions, providing a broader and potentially more versatile approach for representing complex real-world data.
{Among the several such models, the Marshall-Olkin family of distributions, introduced in \cite{marshall1997}, and their extensions, have attracted much attention in the literature, due to its simplicity and flexibility. Some recent examples include \cite{Ameeq2024}, \cite{GomesDeniz2024} or \cite{cordeiro2024}.}
{The Marshall-Olkin distributions are built upon some baseline distribution and allow for a relatively simple description of their hazard function, with a strong focus on preserving its monotonicity.}
{However, models with proportional hazard can be found beyond the Marshall-Olkin family. A simple such example is the proportional hazards rate (PHR) model where the survival function is redefined by raising a baseline survival function to some power}.
{It is well-known, increasing hazard rates lead to distributions with finite moments of every order, thus excluding the possibility of models with heavy-tails. On the other hand, decreasing hazard rates models seem less natural in applications. Therefore, there is convenience on broadening the approach to wider families of distributions.}
{An alternative class of distributions, assuming increasingness of the odds function, has been recently studied by \cite{oddspaper} and \cite{Zardasht2024}. This class includes every distribution with increasing hazard rate, and allows for heavy-tailed distributions and some bathtub-shaped hazard rates. Therefore, it is quite natural to explore models where we are interested in the proportionality of the odds function. Examples of model construction based on this idea have been followed by \cite{Bennett1983}, \cite{Collett2023}, \cite{DinseLagakos1983,DinseLagakos1984}, \cite{RossiniTsiatis1996} or \cite{Panja2024}, while structural properties of such models have been studied in \cite{KirmaniGupta2001} or \cite{SanJaya2008}.}

{We suggest two models, both based on preserving monotonicity of the odds function. Being interested in shape properties expressed through the odds, we are naturally driven into studying structural relations expressed by stochastic ordering properties, instead of a more statistical and computational approach.
\correction{}{We note that a similar approach has been studied in \cite{cordeiro2014a}, although concentrated on the control of the tail behaviour of the distributions, and in the statistical properties and estimation aspects.}
The first model \correction{}{we propose}, that we name the \textit{odds-Marshal-Olkin} (oMO), adapts the proportionality idea to the odds function. In fact, the oMO model encompasses the proportionality of the odds and also of the log-odds, providing more flexibility, as applying a logarithmic transformation often leads to an affine relation. The usefulness of this adaptability is illustrated in Example~\ref{ex:Prentice} below. However, the oMO model excludes the already mentioned PHR model, that, in general, produces a more complex odds function. Moreover, the oMO model, although attractive due to its simplicity, allows for limited preservation of interesting stochastic ordering relations or of shape properties. A second and more general construction, that we name \textit{{distorted} odds-Marshal-Olkin model} ({d-oMO}), addresses these difficulties and shows a richer stochastic ordering structure. This broader model defines the odds function by transforming a baseline odds function through the quantile function of an enlarged log-logistic family of distributions (see Definition~\ref{def:GLL} below for details). This relation, means that ordering relations within this new family of distributions translate into the d-oMO distributions, raising the interest in exploring also the properties of this enlarged log-logistic family.}

{The paper is structured as follows. Section~\ref{sec:prelim} provides preliminary definitions and background essential for understanding the proposed models, including a review of stochastic orders and shape properties in distribution families. In Section~\ref{sec:firstmodel}, we introduce the {oMO} model, with a focus on its properties and stochastic comparisons with the baseline distribution. Section~\ref{sec:distmodel} extends the model by {defining} a distorted odds ratio model{, the d-oMO model}, leveraging additional parameters to further control distributional shape. Finally, in Section~\ref{sec:GLL}, we explore the enlarged log-logistic distribution, detailing its implications for odds and hazard rate behaviours.}

\section{Preliminaries and basic definitions}
\label{sec:prelim}
We shall represent by $X$, $F$ and $f$ the baseline random variable, its cumulative and density functions (that we will be assuming to exist), respectively. Analogously, $Y$, $G$ and $g$, possibly with some subscripts to denote parameters, will represent the new models to be studied.
Moreover, survival functions are represented as $\Fbar(x)=1-F(x)$ or $\Gbar(x)=1-G(x)$. We shall refer to the random variables or to their distribution functions as is more convenient. In fact, the characterisations we will be discussing depend only on the distribution, so the random variables will appear only as a convenience.
We recall the usual notions {which were} briefly mentioned in the Introduction. Given a distribution function $F$, its hazard rate and reversed hazard rate are denoted with $h_F(x)=\frac{f(x)}{\Fbar(x)}$ and $\widetilde{h}_F(x) = \frac{f(x)}{F(x)}$ respectively, its odds function with $\Lambda_F(x)=\frac{F(x)}{\Fbar(x)}{=\frac1{\Fbar(x)}-1}$, and its odds rate with $\lambda_F(x)=\Lambda_F^\prime(x)=\frac{f(x)}{\Fbar^2(x)}$.
While the monotonicity of the hazard rate function has been extensively studied in the literature, for the odds function, which is always increasing, the interest relies on its growth rate, characterised by monotonicity of $\lambda_F$. These functions may be used to define some classes of distributions.


\begin{definition}
\label{def:classes}
We say that $X$ or $F$ have
    \begin{enumerate}
    \item
     increasing (decreasing) hazard rate, represented by $F\in\IHR$ ($F\in\DHR)$, if $h_F$ is increasing (decreasing);
    \item
   increasing (decreasing) odds rate, represented by $F\in\IOR$ $(F\in\DOR)$, if $\lambda_F$ is increasing (decreasing);
    \item
    convex (concave) log-odds if $\log\Lambda_F(x)$ is convex (concave).
    \end{enumerate}
\end{definition}
The \IHR\ and \DHR\ families are well-known in the literature, while the \IOR\ family has been receiving less attention. Some properties of the \IOR\ class are studied in a systematic way in \cite{oddspaper}. The \DOR\ family is only briefly mentioned in \cite{ineq-order2024}, and, also recently discussed in \cite{superPareto2024}, although with a different terminology. Note that $F\in\IOR$ $(F\in\DOR)$ is equivalent to the odds {function} $\Lambda_F$ being convex (concave), so these odds {rate} classes describe a shape property of the corresponding distributions.

We now recall some common stochastic order notions that will be considered later.
\begin{definition}
\label{def:order1}
Consider two distribution functions $F_1$ and $F_2$, with densities $f_1$ and $f_2$, respectively. We say that $F_1$ is smaller than $F_2$
    \begin{enumerate}
    \item
    in the usual stochastic order, denoted as $F_1\leqst F_2$, if $\Fbar_1(x)\leq \Fbar_2(x)$, for every $x\in\mathbb{R}$;
    \item
    in the hazard rate order, denoted as $F_1\leqhr F_2$, if $h_{F_1}(x)\geq h_{F_2}(x)$, for every $x\in\mathbb{R}$;
    \item
    in the reversed hazard rate order, denoted as $F_1\leqrhr F_2$, if $\widetilde{h}_{F_1}(x)\geq \widetilde{h}_{F_2}(x)$, for every $x\in\mathbb{R}$;
    \item
    in the likelihood rate order, denoted as $F_1\leqlr F_2$, if $\frac{f_2(x)}{f_1(x)}$, is increasing.
    \item
    in the dispersive order, denoted as $F_1\leq_{disp}F_2$, if $F_2^{-1}\circ F_1(x)-x$ increases in $x$.
    \end{enumerate}
\end{definition}

Given the alternative expression for the odds function, the following statement is straightforward.
\begin{proposition}
\label{prop:oddsa}
Given two distribution functions $F_1$ and $F_2$, $F_1\leqst F_2$ if and only if $\Lambda_{F_1}(x)\geq\Lambda_{F_2}(x)$.
\end{proposition}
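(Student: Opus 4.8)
The plan is to reduce the statement to a one-step monotonicity argument built on the alternative form of the odds function recorded just before Definition~\ref{def:classes}, namely $\Lambda_F(x)=\frac{1}{\Fbar(x)}-1$. First I would write both odds functions in this form, so that for every $x$ one has $\Lambda_{F_1}(x)=\frac{1}{\Fbar_1(x)}-1$ and $\Lambda_{F_2}(x)=\frac{1}{\Fbar_2(x)}-1$. After cancelling the common constant $-1$, the claimed inequality $\Lambda_{F_1}(x)\geq\Lambda_{F_2}(x)$ becomes equivalent to $\frac{1}{\Fbar_1(x)}\geq\frac{1}{\Fbar_2(x)}$.

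Next I would invoke the fact that $t\mapsto 1/t$ is strictly decreasing on $(0,\infty)$. Since survival functions are nonnegative, reciprocation reverses the order, so $\frac{1}{\Fbar_1(x)}\geq\frac{1}{\Fbar_2(x)}$ holds if and only if $\Fbar_1(x)\leq\Fbar_2(x)$. Requiring this at every $x$ is precisely the definition of $F_1\leqst F_2$ from Definition~\ref{def:order1}, and because each step is an equivalence the two implications follow simultaneously, establishing the ``if and only if''.

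The argument is essentially computational and I do not anticipate a genuine obstacle; the only point requiring a little care is the behaviour at points where $\Fbar$ vanishes, where the odds is $+\infty$. There the reciprocal map is handled by the usual conventions on the extended real line (with $1/0=+\infty$), and the equivalence $\Fbar_1(x)\leq\Fbar_2(x)\iff\frac{1}{\Fbar_1(x)}\geq\frac{1}{\Fbar_2(x)}$ persists. Hence no separate treatment of these points is needed, and the proposition follows directly from the alternative expression for the odds function.
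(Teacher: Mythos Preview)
Your argument is correct and follows exactly the route the paper intends: the authors state that the proposition is ``straightforward'' from the alternative expression $\Lambda_F(x)=\frac{1}{\Fbar(x)}-1$, and your proof simply makes explicit the obvious monotonicity step that this expression affords. Your remark on the $\Fbar=0$ case is a harmless extra observation.
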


The classes mentioned in Definition~\ref{def:classes} defined by the monotonicity of the hazard or the odds rate may be characterised via a different type of stochastic order, namely the convex transform order which involves a shape restriction on the transformation that maps one distribution to the one being compared.

\begin{definition}[\cite{vanzwet1964}]
\label{def:vanZwet}
Given two distribution functions $F_1$ and $F_2$, we say that $F_1$ is smaller than $F_2$ in the convex transform order, represented by $F_1\leqc F_2$, if $F_2^{-1}\circ F_1$ is convex.
\end{definition}
Let us now fix, for the sequel, two reference distributions: the standard exponential, with distribution function $\mathcal{E}(x)=1-e^{-x}$, and the {standard} log-logistic, with distribution function $\mathcal{L}(x)=1-\frac{1}{x+1}=\frac{x}{x+1}$.
It is well-known that $F\in\IHR$ $(F\in\DHR)$ if and only if $F\leqc\mathcal{E}$ $(F\geqc\mathcal{E})$. Analogously, as referred in \cite{oddspaper}, it is easily seen that $F\in\IOR$ $(F\in\DOR)$ if and only if $F\leqc\mathcal{L}$ $(F\geqc\mathcal{L})$.

For {a systematic study of} properties of {the} stochastic orders {defined above, and a number of other interesting stochastic order relations,} and relations among them, we refer the interested reader to the monographs \cite{shaked2007} or \cite{marshall2007}.

\section{{The odds-Marshall-Olkin model}}
\label{sec:firstmodel}
The study of the growth rate of the odds function is fundamental in the characterisation of distribution families that maintain specific shape properties such as the IOR, particularly in reliability and survival analysis. In this context, we introduce a modified proportional odds model that leverages the properties of the IOR and log-odds convexity to create new distribution families. This approach
{extends the Marshall-Olkin method to the construction of families of distributions to} the broader proportional odds framework.

\begin{definition}
\label{def:model1}
Let $\beta,\theta>0$. Given a baseline distribution function $F$, we define the \textit{odds-Marshall-Olkin} (oMO) distribution function $\nsG$ by
\begin{equation}
\label{eq:def1}
\Lambda_{\nsG}(x)=\beta\Lambda_F^\theta(x)=\beta \left(\frac{F(x)}{\Fbar(x)}\right)^\theta.
\end{equation}
\end{definition}
It is obvious that $G_{\beta,1}$ has odds {function} $\Lambda_{G_{\beta,1}}$ proportional to $\Lambda_F$, while for $G_{1,\theta}$ we have that $\log\Lambda_{G_{1,\theta}}(x)=\theta\log\Lambda_{F}(x)$, that is, (\ref{eq:def1}) covers the case of a model with proportional log-odds.
{The classical Marshal-Olkin model, for which there exists a huge literature, is obtained by taking $\theta=1$}.
{It is also clear from (\ref{eq:def1}) that the {oMO} model encompasses affine relations of the odds function with respect to the baseline distribution, thus going beyond the strict proportionality.}

Taking into account that $\Lambda_{\nsG}(x)=\frac{\nsG(x)}{\nsGbar(x)}=\frac{1}{\nsGbar(x)}-1$, it follows easily that, for each $x\in\mathbb{R}$,
\begin{equation}
\label{def:Gbar}
\nsG(x)=\frac{\beta F^\theta(x)}{\beta F^\theta(x)+\Fbar^\theta(x)},
\qquad\text{and}\qquad
\nsGbar(x)=\frac{\Fbar^\theta(x)}{\beta F^\theta(x)+\Fbar^\theta(x)}.
\end{equation}

{The} following shape characterisation\correction{}{, for $\theta=1$,} is a straightforward consequence of the convexity properties of the function $\frac{\beta x}{1-(1-\beta)x}$ when $x\in[0,1]$.
\begin{proposition}
\label{prop:conv}
If $F$ is concave, then $G_{\beta,1}$ for $\beta\geq 1$ is also concave. If $F$ is convex, then $G_{\beta,1}$ for $\beta\leq 1$ is also convex.
\end{proposition}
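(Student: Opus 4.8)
The plan is to express $G_{\beta,1}$ as the composition of the baseline $F$ with an explicit scalar function and then exploit the classical fact that composing a monotone map with a concave (or convex) map preserves the shape. Specialising (\ref{def:Gbar}) to $\theta=1$ and using $\beta F(x)+\Fbar(x)=1-(1-\beta)F(x)$, I would write
\begin{equation*}
G_{\beta,1}(x)=\phi\bigl(F(x)\bigr),\qquad \phi(u)=\frac{\beta u}{1-(1-\beta)u},\quad u\in[0,1],
\end{equation*}
which is exactly the function flagged in the statement.

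First I would check that $\phi$ is well defined and increasing on $[0,1]$. Writing $d=1-\beta$, the denominator $1-du$ stays strictly positive for $u\in[0,1]$ and every $\beta>0$ (it equals $1$ at $u=0$ and $\beta$ at $u=1$), and a direct computation gives $\phi'(u)=\beta/(1-du)^2>0$. Next I would compute $\phi''(u)=2\beta d/(1-du)^3$; since the denominator is positive, the sign of $\phi''$ coincides with the sign of $d=1-\beta$. Hence $\phi$ is concave when $\beta\geq 1$ and convex when $\beta\leq 1$.

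The conclusion then follows from the standard composition rule. If $\phi$ is increasing and concave and $F$ is concave, then for $t\in[0,1]$ concavity of $F$ gives $F(tx+(1-t)y)\geq tF(x)+(1-t)F(y)$, monotonicity of $\phi$ preserves this inequality, and concavity of $\phi$ yields $\phi(tF(x)+(1-t)F(y))\geq t\phi(F(x))+(1-t)\phi(F(y))$, so that $\phi\circ F=G_{\beta,1}$ is concave. The convex case (with $\phi$ increasing and convex, $F$ convex, $\beta\leq 1$) is identical with all inequalities reversed.

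Since the analytic content reduces to the one-line sign computation for $\phi''$, there is no genuine obstacle here; the only point requiring care is that the composition argument relies on $\phi$ being \emph{monotone}, not merely convex or concave. I would therefore make sure to record the positivity of $\phi'$ (equivalently $\beta>0$) explicitly before invoking the composition step, and to note that $1-du$ never vanishes on $[0,1]$ so that $\phi$ is genuinely well defined and smooth there.
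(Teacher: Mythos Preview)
Your proof is correct and follows exactly the approach the paper indicates: write $G_{\beta,1}=\phi\circ F$ with $\phi(u)=\frac{\beta u}{1-(1-\beta)u}$, check the sign of $\phi''$, and invoke the composition rule for an increasing concave (resp.\ convex) outer function. The paper leaves this as ``a straightforward consequence of the convexity properties'' of $\phi$, so you have simply filled in the routine details.
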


{It is easily seen} that the corresponding transformation for $\theta\ne1$ and general $\beta>0$ is 
neither convex nor concave, so no conclusion about the convexity of $\nsG$ can be drawn.

From (\ref{def:Gbar}), the density and hazard rate functions for $\nsG$ are easily obtained:
\begin{equation}
\label{eq:gdens}
\nsg(x)=\beta \theta f(x)\frac{F^{\theta-1}(x)\Fbar^{\theta-1}(x)}{(\beta F^\theta(x)+\overline{F}^\theta(x))^2},
\end{equation}
and
\begin{equation}
\label{eq:hazard}
h_{\nsG} (x)=\beta \theta h_F(x)\frac{F^{\theta-1}(x)}{\beta F^\theta(x)+\Fbar^\theta(x)}
   =\beta \theta h_F(x) \left(\frac{F(x)}{\Fbar(x)}\right)^{\theta-1}\frac{\nsGbar(x)}{\Fbar(x)}.
\end{equation}

{The effect of the parameters on the density of $\nsG$ is illustrated in Figure~\ref{fig1-2}. The parameter $\theta$ concentrates the distribution while creating lighter tails, whereas $\beta$ shifts the concentration region to the left, skewing the distribution to the right and making the tail slightly lighter.}
{Obviously, the interplay between the parameters provides control over spread and tail behaviour, offering an increased adaptability in applications.}
\begin{figure}
\centering
\includegraphics[scale=.4]{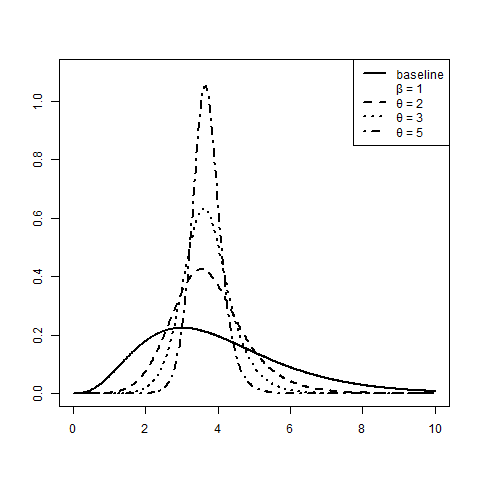}\includegraphics[scale=.4]{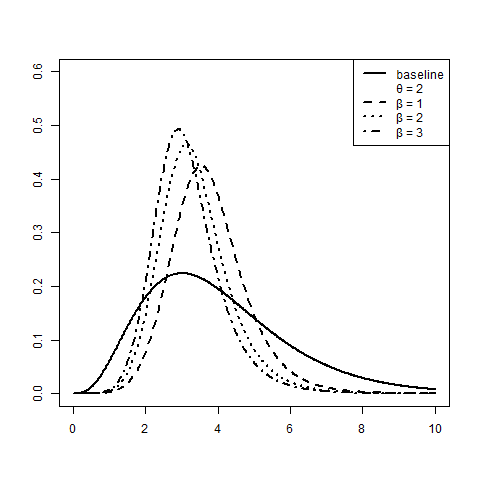}
\caption{Densities of the baseline distribution $\Gamma(4,1)$ and $\nsG$.}
\label{fig1-2}
\end{figure}

Defining $T_{\beta,\theta}(x)=\beta \theta \frac{x^{\theta-1}}{\beta x^\theta+(1-x)^\theta}$, {the first equality in} (\ref{eq:hazard}) may be rewritten as $h_{\nsG}(x)=h_F(x)T_{\beta,\theta}(F(x))$, {implying} immediate {shape} characterisations {for} some distributions {included in the oMO model}.


{\samepage
\begin{theorem}\
\label{thm:classes}
    \begin{enumerate}
    \item
    If $F\in\IHR$ and $\beta\leq 1$, then $G_{\beta,1}\in\IHR$.
    \item
    If $F\in\DHR$ and $\beta\geq 1$, then $G_{\beta,1}\in\DHR$.
    \item
    If $F\in\IOR$, then, for $\theta\geq 1$, $\nsG\in\IOR$.
    \end{enumerate}
\end{theorem}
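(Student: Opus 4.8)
The plan is to lean on the two explicit representations already established: the factored hazard rate $h_{\nsG}(x)=h_F(x)\,T_{\beta,\theta}(F(x))$ obtained from (\ref{eq:hazard}) for the first two statements, and the defining relation $\Lambda_{\nsG}=\beta\,\Lambda_F^\theta$ of (\ref{eq:def1}) together with the equivalence noted after Definition~\ref{def:classes} (that $F\in\IOR$ is the same as convexity of $\Lambda_F$) for the third.

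For parts (1) and (2) I would specialise to $\theta=1$, where the multiplier collapses to $T_{\beta,1}(x)=\frac{\beta}{1-(1-\beta)x}$, so that $h_{G_{\beta,1}}(x)=h_F(x)\,\frac{\beta}{1-(1-\beta)F(x)}$. Since $F$ is nondecreasing, the composite factor $x\mapsto\frac{\beta}{1-(1-\beta)F(x)}$ is monotone, with direction governed solely by the sign of $1-\beta$: for $\beta\leq 1$ its denominator decreases and the factor increases, while for $\beta\geq 1$ the factor decreases. Reading $h_{G_{\beta,1}}$ as the product of this factor with $h_F$ then transfers the common monotonicity---increasing under the IHR/$\beta\leq 1$ hypotheses, decreasing under the DHR/$\beta\geq 1$ hypotheses. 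The one point needing care is that a product of monotone functions is monotone only when both factors are nonnegative; here this is automatic, since hazard rates and the factor are nonnegative throughout.

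For part (3) I would argue directly at the level of the odds function. By (\ref{eq:def1}) we have $\Lambda_{\nsG}=\beta\,(\Lambda_F)^\theta$, and IOR membership amounts to convexity of this function. The inner function $\Lambda_F$ is nonnegative and, under the IOR hypothesis, convex, while the outer map $t\mapsto t^\theta$ is convex and nondecreasing on $[0,\infty)$ exactly when $\theta\geq 1$. Invoking the composition rule---a nondecreasing convex function composed with a convex function is convex---gives convexity of $(\Lambda_F)^\theta$, preserved after multiplying by the positive constant $\beta$; hence $\Lambda_{\nsG}$ is convex and $\nsG\in\IOR$. The substance of the argument is precisely matching the two convexity hypotheses of the composition rule, which is what forces $\theta\geq 1$; no constraint on $\beta$ is required, since scaling by a positive constant does not affect convexity.

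No step is expected to be a genuine obstacle. The real reason parts (1)--(2) are stated only for $\theta=1$ is that this keeps $T_{\beta,\theta}$ simple enough for the elementary sign analysis to succeed; as the remark following (\ref{eq:hazard}) already indicates, the general-$\theta$ multiplier is neither convex nor concave, so the same reasoning would not carry over unchanged.
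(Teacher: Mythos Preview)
Your proposal is correct and follows essentially the same route as the paper: for parts (1) and (2) you specialise the factorisation $h_{\nsG}=h_F\cdot T_{\beta,\theta}(F)$ to $\theta=1$ and read off the monotonicity of $T_{\beta,1}(x)=\beta/(1-(1-\beta)x)$, exactly as the paper does; for part (3) you invoke (\ref{eq:def1}) and the convexity of $t\mapsto t^\theta$ for $\theta\geq 1$, which is precisely what the paper means by ``follows directly from (\ref{eq:def1}) taking into account that $\theta\geq 1$.'' Your closing remark slightly misattributes the obstruction for general $\theta$: the issue is that $T_{\beta,\theta}$ is not \emph{monotone} when $\theta\neq 1$ (not a convexity failure), but this is peripheral to the proof itself.
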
}
\begin{proof}
The result is immediate once we verify the monotonicity properties of $T_{\beta,\theta}$. {It is} easily {verified} that $T_{\beta,\theta}$ is monotone only for $\theta=1$, that $T_{\beta,1}$ is increasing for $\beta\leq1$, and that $T_{\beta,1}$ is decreasing for $\beta\geq 1$. With regard to the preservation of the IOR property, it follows directly
from (\ref{eq:def1}) taking into account that $\theta \geq 1$.
\end{proof}

{Note that, as for Proposition~\ref{prop:conv}, when $\theta\ne1$, no conclusion can be drawn about the monotonicity of the hazard of $\nsG$, as the corresponding $T_{\beta,\theta}$ is not monotonous. However, shifting our interest for the odds rate, part 3. in Theorem~\ref{thm:classes} provides {results for $\theta\geq 1$}. {Therefore, the odds rate shows greater flexibility in capturing the varying behaviours of model (\ref{eq:def1}).}
}

Given that $\nsG$ is a transformation of $F$, it is natural to compare the baseline 
with the transformed distribution. Specifically, we are interested in understanding how the transformation applied to $F$ affects key reliability properties and relationships.

\begin{theorem}\
\label{thm:lr}
    \begin{enumerate}
    \item
    If $\beta\leq 1$, then $F\leqlr G_{\beta,1}$.
    \item
    If $\beta\geq 1$, then $F\geqlr G_{\beta,1}$.
    \end{enumerate}
\end{theorem}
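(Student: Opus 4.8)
The plan is to compute the likelihood ratio directly from the density formula (\ref{eq:gdens}) and to reduce the question of its monotonicity in $x$ to a one-variable sign check.

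First I would specialise (\ref{eq:gdens}) to $\theta=1$, where $F^{\theta-1}=\Fbar^{\theta-1}=1$, obtaining
\begin{equation*}
g_{\beta,1}(x)=\frac{\beta f(x)}{(\beta F(x)+\Fbar(x))^2}.
\end{equation*}
Recalling the definition of the likelihood ratio order (Definition~\ref{def:order1}), establishing $F\leqlr G_{\beta,1}$ amounts to showing that the ratio $g_{\beta,1}(x)/f(x)$ is increasing, while $F\geqlr G_{\beta,1}$ requires it to be decreasing. Dividing by $f(x)$ cancels the baseline density entirely, leaving
\begin{equation*}
\frac{g_{\beta,1}(x)}{f(x)}=\frac{\beta}{(\beta F(x)+\Fbar(x))^2}=\frac{\beta}{(1-(1-\beta)F(x))^2},
\end{equation*}
where I have used $\beta F(x)+\Fbar(x)=1-(1-\beta)F(x)$.

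The key observation is that this ratio is a composition $\phi(F(x))$ with $\phi(u)=\beta\,(1-(1-\beta)u)^{-2}$ and $F$ nondecreasing; since monotonicity is preserved under composition with an increasing function, it suffices to determine the monotonicity of $\phi$ on $[0,1]$. Differentiating gives $\phi^\prime(u)=\frac{2\beta(1-\beta)}{(1-(1-\beta)u)^3}$, whose denominator equals $(\beta F(x)+\Fbar(x))^3>0$; hence $\phi^\prime$ has the sign of $1-\beta$. For $\beta\leq 1$ this is nonnegative, so $\phi$, and therefore the ratio, is increasing, yielding part~1; for $\beta\geq 1$ the sign is nonpositive, so the ratio is decreasing, yielding part~2.

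The computation is entirely routine, so I do not expect any genuine obstacle. The only point worth a remark is that one should not differentiate in $x$ directly but instead reduce to the auxiliary variable $u=F(x)$, exploiting that $F$ is nondecreasing so that the monotonicity of $\phi$ transfers to the ratio without any smoothness assumption on $F$ beyond the existence of its density. This also clarifies why no hypothesis on $F$ other than being a genuine distribution with a density is needed for either part.
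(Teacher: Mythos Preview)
Your proof is correct and follows exactly the same approach as the paper: both compute the ratio $\frac{g_{\beta,1}(x)}{f(x)}=\frac{\beta}{(1+(\beta-1)F(x))^2}$ and observe that its monotonicity is governed by the sign of $1-\beta$. The paper simply states that the result ``follows immediately'' from this expression, while you spell out the composition argument with $\phi(u)$ in more detail.
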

\begin{proof}
It is easily verified that $\frac{g_{\beta,1}(x)}{f(x)} = \frac{\beta}{(1+(\beta-1)F(x))^2}$, so the result follows immediately.
\end{proof}


\begin{corollary}\
\label{cor:hr+}
    \begin{enumerate}
    \item
    If $\beta\leq 1$, then $F\leqrhr G_{\beta,1}$, $F\leqhr G_{\beta,1}$ and $F\leqst G_{\beta,1}$.
    \item
    If $\beta\geq 1$, then $F\geqrhr G_{\beta,1}$, $F\geqhr G_{\beta,1}$ and $F\geqst G_{\beta,1}$.
    \end{enumerate}
\end{corollary}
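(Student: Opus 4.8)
The plan is to obtain the entire statement directly from Theorem~\ref{thm:lr} by invoking the standard hierarchy among the stochastic orders introduced in Definition~\ref{def:order1}. Theorem~\ref{thm:lr} already supplies the strongest of these comparisons, namely the likelihood ratio order: $F\leqlr G_{\beta,1}$ when $\beta\leq1$ and $F\geqlr G_{\beta,1}$ when $\beta\geq1$. It is classical that the likelihood ratio order implies both the hazard rate order and the reversed hazard rate order, and that each of these in turn implies the usual stochastic order. Consequently, each of the three conclusions in part~1 is read off from $F\leqlr G_{\beta,1}$, and each in part~2 from $F\geqlr G_{\beta,1}$. I would simply cite these implications, which are recorded, for instance, in \cite{shaked2007}.

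If a self-contained argument were preferred, I would verify the two key implications $\leqlr\Rightarrow\leqhr$ and $\leqlr\Rightarrow\leqrhr$ starting from the increasing ratio $g_{\beta,1}(x)/f(x)=\beta/(1+(\beta-1)F(x))^2$ obtained in the proof of Theorem~\ref{thm:lr}. The idea is that $\overline{G}_{\beta,1}(x)/\Fbar(x)$ and $G_{\beta,1}(x)/F(x)$ are, respectively, weighted averages of this ratio over the tails $(x,\infty)$ and $(-\infty,x]$ with weight $f$. Monotonicity of the ratio forces the tail average $\overline{G}_{\beta,1}/\Fbar$ to dominate the pointwise value $g_{\beta,1}/f$, which rearranges to $h_F\geq h_{G_{\beta,1}}$, i.e. $F\leqhr G_{\beta,1}$; the analogous computation over $(-\infty,x]$ yields the reversed hazard rate comparison. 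In either case the usual stochastic order follows, since a one-sided domination of the (reversed) hazard rates integrates to $\Fbar\leq\overline{G}_{\beta,1}$.

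The proof presents no genuine obstacle: all the substance is already contained in Theorem~\ref{thm:lr}, and the corollary is essentially a bookkeeping exercise in the order implications. The only point requiring care is keeping track of the sign conventions of Definition~\ref{def:order1} (where $F_1\leqhr F_2$ corresponds to $h_{F_1}\geq h_{F_2}$), so that the reversal of the inequalities in part~2, coming from $F\geqlr G_{\beta,1}$, is propagated correctly to all three weaker orders.
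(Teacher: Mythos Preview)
Your approach is correct and coincides exactly with the paper's own proof: the authors also derive the corollary as an immediate consequence of Theorem~\ref{thm:lr} together with the standard chain of implications $\leqlr\Rightarrow\leqhr,\ \leqrhr\Rightarrow\leqst$, citing Theorem~1.C.1 in \cite{shaked2007}. Your optional self-contained verification via tail averages of the monotone ratio $g_{\beta,1}/f$ is fine but unnecessary here.
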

\begin{proof}
{This is an immediate consequence of Theorem~\ref{thm:lr} and Theorem~1.C.1 in \cite{shaked2007}.}
\end{proof}

Note that when $\theta=1$, the following explicit bounds for $h_{G_{\beta,1}}$ are immediate:
\begin{enumerate}
\item
For $\beta\leq 1$, $\beta h_F(x) \leq h_{G_{\beta,1}}(x) \leq h_F(x)$,
\item
For $\beta\geq 1$, $h_F(x) \leq h_{G_{\beta,1}}(x) \leq \beta h_F(x)$.
\end{enumerate}

The previous results mention comparability for $G_{\beta,1}$. This choice for $\theta$ is the only one allowing for comparability results, as stated next.
\begin{corollary}
\label{cor:noncomp}
For $\theta\ne 1$, $F$ and $\nsG$ are not comparable with respect to the usual stochastic order. Therefore, they are also not comparable with respect to $\leqhr$, $\leqrhr$ or $\leqlr$ stochastic orders.
\end{corollary}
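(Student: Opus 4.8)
The plan is to reduce the assertion about $\leqst$ to an elementary sign analysis of odds functions through Proposition~\ref{prop:oddsa}, and then to obtain the incomparability under the three stronger orders for free from the standard implications between stochastic orders. First I would invoke Proposition~\ref{prop:oddsa}: $F\leqst\nsG$ is equivalent to $\Lambda_F(x)\geq\Lambda_{\nsG}(x)$ for all $x$, while $\nsG\leqst F$ is equivalent to the reverse inequality holding for all $x$. Substituting the defining relation $\Lambda_{\nsG}(x)=\beta\Lambda_F^\theta(x)$ from (\ref{eq:def1}), both candidate orderings amount to the statement that the difference $\Lambda_F(x)-\beta\Lambda_F^\theta(x)$ keeps a constant sign in $x$. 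Showing that this sign in fact changes is what establishes non-comparability.

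The key computation is to set $u=\Lambda_F(x)$ and factor $\Lambda_F(x)-\beta\Lambda_F^\theta(x)=u\bigl(1-\beta u^{\theta-1}\bigr)$. Since $u>0$, the sign is controlled entirely by the factor $1-\beta u^{\theta-1}$. For $\theta\neq1$ the map $u\mapsto u^{\theta-1}$ is a strictly monotone bijection of $(0,\infty)$ onto itself (increasing when $\theta>1$, decreasing when $0<\theta<1$), so it crosses the level $1/\beta$ exactly once, at $u_0=\beta^{-1/(\theta-1)}$. Consequently $1-\beta u^{\theta-1}$ is positive on one side of $u_0$ and negative on the other, and the same is true of $\Lambda_F-\Lambda_{\nsG}$. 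Hence neither $\Lambda_F\geq\Lambda_{\nsG}$ everywhere nor $\Lambda_F\leq\Lambda_{\nsG}$ everywhere can hold, and Proposition~\ref{prop:oddsa} yields that $F$ and $\nsG$ are not comparable in the usual stochastic order.

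For the remaining orders I would simply use the well-known chain of implications $\leqlr\Rightarrow\leqhr\Rightarrow\leqst$ and $\leqlr\Rightarrow\leqrhr\Rightarrow\leqst$ (the relevant facts are those recalled via Theorem~1.C.1 in \cite{shaked2007}). If $F$ and $\nsG$ were ordered in either direction under any of $\leqhr$, $\leqrhr$ or $\leqlr$, that would force the corresponding $\leqst$ ordering, contradicting the incomparability just established. Therefore they are incomparable under all three stronger orders as well.

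The only delicate point, which I expect to be the main obstacle, is ensuring that $\Lambda_F$ genuinely attains values on both sides of the crossing point $u_0$, so that the sign change is actually realised on the support of the baseline. This is precisely where the standing assumption that $F$ is a nondegenerate continuous distribution enters: as $x$ traverses the support, $F(x)/\Fbar(x)$ ranges over the whole half-line $(0,\infty)$, since $F(x)\to0$ at the lower endpoint and $F(x)\to1$ at the upper endpoint. Thus $\Lambda_F$ takes values both below and above $u_0$, and the argument goes through for every admissible baseline.
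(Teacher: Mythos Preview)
Your argument is correct and, in fact, a little cleaner than the paper's. Both proofs establish that $\Fbar-\nsGbar$ changes sign by reducing to a one-variable sign analysis, and both deduce incomparability under $\leqhr$, $\leqrhr$, $\leqlr$ from the failure of $\leqst$ via Theorem~1.C.1 in \cite{shaked2007}. The difference lies in the choice of parametrisation. The paper substitutes $u=F(x)$ and studies $S_{\beta,\theta}(u)=\frac{(1-u)^{\theta-1}}{\beta u^{\theta}+(1-u)^{\theta}}-1$ on $[0,1]$, which requires differentiating and tracking the sign of $S_{\beta,\theta}^\prime$ near $u=0$ together with the endpoint values $S_{\beta,\theta}(0)=0$ and $S_{\beta,\theta}(1)\in\{-1,+\infty\}$. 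You instead invoke Proposition~\ref{prop:oddsa} and substitute $u=\Lambda_F(x)$, which reduces the comparison to the sign of $u(1-\beta u^{\theta-1})$ on $(0,\infty)$; the factorisation is immediate and the unique crossing at $u_0=\beta^{-1/(\theta-1)}$ falls out without any differentiation. Your route exploits the odds-based definition (\ref{eq:def1}) more directly and avoids the derivative computation, at the cost of needing to remark explicitly (as you do) that $\Lambda_F$ surjects onto $(0,\infty)$ so that both sides of $u_0$ are attained---a point the paper handles implicitly by working on the full range $[0,1]$ of $F$.
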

\begin{proof}
We need to look at
$$
\nsGbar(x)-\Fbar(x) \eqs \frac{\Fbar(x)^{\theta-1}}{\beta F(x)^{\theta}+\Fbar(x)^{\theta}}-1,
$$
so, the conclusion follows by analysing the sign of {$S_{\beta,\theta}(x) = \frac{(1-x)^{\theta-1}}{\beta x^\theta +(1-x)^{\theta}}-1$, for $x\in [0,1]$.}
Differentiating, one finds $S_{\beta,\theta}^\prime(x)\eqs (1-x)^\theta-\beta x^{\theta-1}(\theta-x)$. For $\theta\ne1$, one has $S_{\beta,\theta}(0)=0$, $S_{\beta,\theta}^\prime$ is positive for $x$ near 0 if $\theta>1$, and is negative if $\theta<1$. Finally, noting that $S_{\beta,\theta}(1)=-1$ if $\theta>1$, and $S_{\beta,\theta}(1)=+\infty$ if $\theta<1$, the first part of the result is proved. The second part is a consequence of Theorem~1.C.1 in \cite{shaked2007}
\end{proof}

{
\begin{example}
\label{ex:Prentice}
As an example of the usefulness of model (\ref{eq:def1}), we consider the data of Veteran's Administration lung cancer trial reported by \cite{Prentice1973}, that was analysed using a proportional odds model by \cite{Bennett1983}. The data describes the survival days of the 97 patients that had no prior treatment and two covariates, a performance status and tumor type. The analysis separated patients in two groups: low performance status (less or equal to 50) and high performance status. As noted in \cite{Bennett1983}, the log-odds of the two groups have, approximately, a constant difference, suggesting an affine relation, which served as a justification to apply a methodology based on a proportional odds model. However, running a linearity approximation test between the two sets of odds or log-odds clearly indicates that a linear relation between the odds can only explain about 75\% of the variability observed in the data, while assuming the linearity of the log-odds, one can explain about 90\% of the variability. Moreover, the estimates of the linear coefficients clearly suggests using model (\ref{eq:def1}) with $\beta=4.4324$ and $\theta=0.6822$. Besides, the group with high performance status has 72 observations, while the low performance status only contains 25 points. Therefore, to estimate the distribution, it is convenient to take as baseline distribution the one describing the high performance status group and then use the parameters mentioned above to get an estimate for the distribution of the survival days {for the low performance status group}. For the high performance status an estimate suggests a $\Gamma(1.13,116.6)$. Hence, the density function for the survival of the low performance status group is approximated by
$$
g_{4.4324,0.6822}(x)=4.4324\times0.6822\frac{f(x)F^{-0.3178}(x)\Fbar^{-0.3178}(x)}{\left(4.4324 F^{0.6822}(x)+\overline{F}^{0.6822}(x)\right)^2}
$$
where $f(x)$ and $F(x)$ are, respectively, the density and distribution functions of the $\Gamma(1.13,116.6)$ distribution. {Taking into account Theorem~\ref{thm:classes}, although the particular $F$ is \IOR, we cannot derive the same property about $G_{4.4324,0.6822}$.}
\end{example}}

{Note that, by construction, the oMO model leads to ordering or shape properties only for $\theta =1$. Although the underlying motivations were of a different nature, the model discussed in the next section allows for results with $\theta\ne1$.}

\section{The distorted odds-Marshall-Olkin model}
\label{sec:distmodel}
The previous section studied stochastic ordering relations between distributions defined by a specific transformation of the odds function, having in mind the possibility of mixing the proportionality of the odds ratio and of the log-odds ratio, each controlled by an appropriate parameter.
Observe that the odds function of the $\nsG$ distribution appears as a distortion of the odds ratio $\Lambda_F$ of the baseline distribution function $F$, adding a layer of flexibility in shaping distributional properties.
The model introduced in Definition~\ref{def:model1} {mimics the construction of the PHR model}, where a survival function is {transformed into} $\overline{F}^\theta(x)$ for some $\theta>0$. However, the PHR model is not covered {by the family of distributions} introduced in  Definition~\ref{def:model1}.
In fact, the odds function {for} the PHR model is of the form $(1+\Lambda_F(x))^\theta-1$.
It is worth noting that this latter form corresponds to transforming the underlying distribution $F$ by the quantile function of a Pareto distribution with survival function $(x+1)^{-\frac1\theta}$. This observation will be explored later in Section~\ref{sec:GLL} in more generality. Nevertheless, the general form of the odds function for the PHR model suggests an extension of the transformation used in Definition~\ref{def:model1}, targeted at fine tuning the tail behaviour.

\begin{definition}
\label{def:model2}
Let $\alpha \geq 0$, $\beta,\,\theta>0$. Given a baseline distribution function $F$, we define the distorted odds-Marshall-Olkin (d-oMO) distribution functions $\nG$ by
\begin{equation}
\label{eq:def2}
\Lambda_{\nG}(x)=\beta\left((\alpha+\Lambda_F(x))^\theta-\alpha^\theta\right).
\end{equation}
\end{definition}

It is obvious that the model introduced in Definition~\ref{def:model1} is a particular case of (\ref{eq:def2}), taking
$\alpha=0$, while the PHR model is obtained by choosing $(\alpha,\beta,\theta)=(1,1,\theta)$.
{Moreover, note that this extended model unifies the proportionality models we have discussed (odds, log-odds and hazard rate), offering seamless transition between them.}

Taking into account that $\Lambda_{\nG}(x)=\frac1{\nGbar(x)}-1$, the following explicit representation for the distributions $\nG$ introduced in Definition~\ref{def:model2} is immediate:
\begin{equation}
\label{eq:nGbar12}
\nGbar(x) = \frac{1}{1+\beta((\alpha+\Lambda_F(x))^\theta - \alpha^\theta)},
\end{equation}
while the density function is represented as
$$
g_{\alpha,\beta,\theta}(x) = \frac{\beta\theta(\alpha+\Lambda_F(x))^\theta}{\Fbar(x)(\alpha\Fbar(x)+F(x))}\Gbar_{\alpha,\beta,\theta}^2(x) f(x)
= \beta \theta(\alpha+\Lambda_F(x))^{\theta-1}\frac{\Gbar_{\alpha,\beta,\theta}^2(x)}{\Fbar(x)} h_{F}(x).
$$

\begin{remark}
The distribution function $\nG$ can be written as
\begin{equation}
\label{eq:nGbar12alt}
\nG(x) = \frac{\beta\left((\alpha+(1-\alpha)F(x))^\theta - (\alpha\Fbar(x))^\theta\right)}{\Fbar^\theta(x)+\beta\left((\alpha+(1-\alpha)F(x))^\theta -(\alpha \Fbar(x))^\theta\right)}.
\end{equation}
In the special case where $\alpha=1$, $G_{1,\beta,\theta}$ is the recently defined MPHR model introduced in \cite{Bala2018}. \cite{DasKayal2021} later extended this model by incorporating a scale parameter, calling it MPHRS. Similarly, our models can be generalised by introducing a scale parameter in the same way.
\end{remark}

\begin{remark}
\label{rem:param}
The family of distributions $\nG$ depends on three parameters. Here is a brief description of the effect of each one of them.
{All the parameters, as they increase, shift the mass towards the origin, introducing skewness and lighter tails. The parameter $\beta$ has a small effect on the mass shifting, affecting mainly mass concentration. The parameter $\theta$ has a dramatic effect in both shifting the mass closer to the origin and the concentration (take into consideration the vertical scale of the plots), hence contributing very significantly to skewness and lighter tails.}
An illustration of these effects can be found in Figure~\ref{fig3-5}.
\begin{figure}
\centering
\includegraphics[scale=.27]{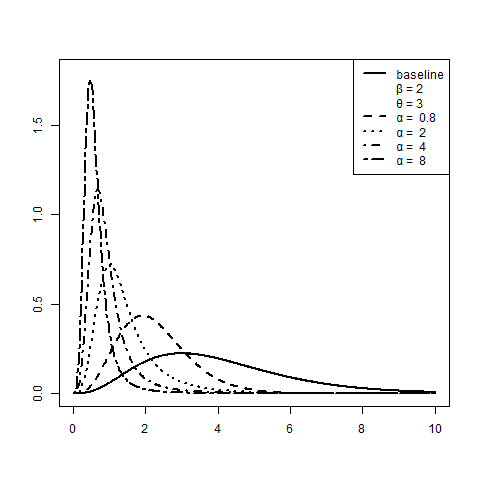}\includegraphics[scale=.27]{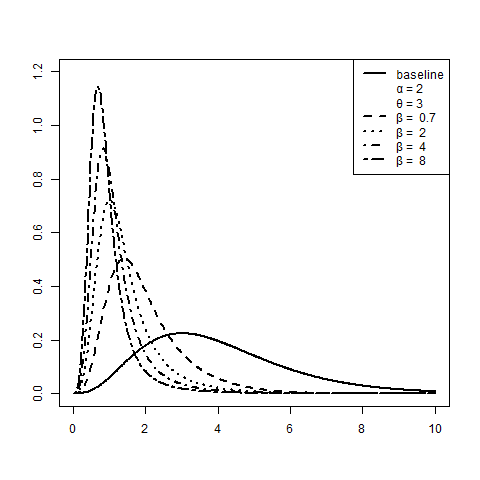}\includegraphics[scale=.27]{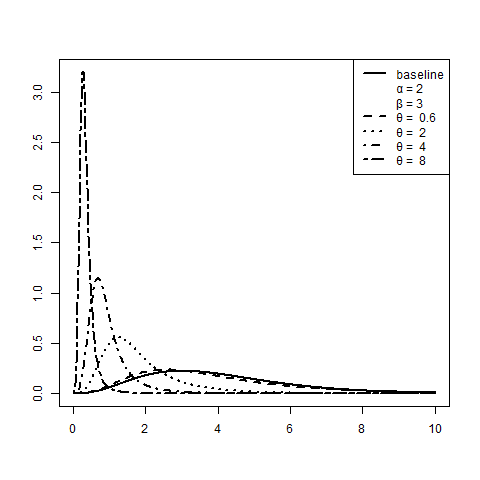}
\caption{Densities of the baseline distribution ($\Gamma(4,1)$) and $\nG$.}
\label{fig3-5}
\end{figure}
\end{remark}

Although it seems that the $\nG$ family is not closed under formation of maximums, that is, in general the distribution of the form $\nG^n(x)$, $n\geq 2$, {is not included in the d-oMO model}, we may still find an \textit{extreme geometrical stability property} (see \cite{marshall1997}).
\begin{theorem}
\label{thm:geomstab}
Let $X_1,X_2,\ldots$ be independent and with distribution function $\nG$, for some fixed values of $\alpha\geq0$, $\beta,\theta>0$, and consider $N$, independent from the $X_n$, with geometric distribution, $P(N=n)=p(1-p)^{n-1}$, $n\geq 1$, for some $p\in[0,1]$. Define $U=\min\{X_1,\ldots,X_N\}$ and $V=\max\{X_1,\ldots,X_N\}$. Then, the distribution function of $U$ and $V$ are $G_{\alpha,\frac{\beta}{p},\theta}$ and $G_{\alpha,{\beta}{p},\theta}$, respectively. Or, equivalently, the family of distributions $\nG$ has geometric extreme stability.
\end{theorem}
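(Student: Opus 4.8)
The plan is to compute the laws of $U$ and $V$ by conditioning on $N$, exploiting the geometric series that arises, and then to identify the resulting odds functions as members of the d-oMO family. The crucial structural observation is that (\ref{eq:def2}) is linear in the parameter $\beta$: multiplying an odds function by a positive constant simply rescales $\beta$ by that constant while leaving $\alpha$ and $\theta$ fixed. Thus everything reduces to tracking how the max/min operations act on the odds function.

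First I would treat the maximum $V$. Conditioning on $N$ and using independence gives
$$
P(V\leq x)=\sum_{n=1}^\infty p(1-p)^{n-1}\nG^n(x)=\frac{p\,\nG(x)}{1-(1-p)\nG(x)},
$$
the last equality being the closed form of a geometric series in $\nG(x)$ (valid since $(1-p)\nG(x)<1$). Writing $\overline{G}_V=1-G_V$, a one-line simplification of the numerator yields $\overline{G}_V(x)=\dfrac{\nGbar(x)}{1-(1-p)\nG(x)}$, and hence the odds function of $V$ is
$$
\Lambda_{G_V}(x)=\frac{G_V(x)}{\overline{G}_V(x)}=\frac{p\,\nG(x)}{\nGbar(x)}=p\,\Lambda_{\nG}(x)=\beta p\left((\alpha+\Lambda_F(x))^\theta-\alpha^\theta\right).
$$
By (\ref{eq:def2}) this is exactly the odds function of $G_{\alpha,\beta p,\theta}$, which pins down the distribution of $V$.

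The minimum $U$ is handled dually, working with survival functions. Conditioning on $N$ gives
$$
\overline{G}_U(x)=\sum_{n=1}^\infty p(1-p)^{n-1}\nGbar^n(x)=\frac{p\,\nGbar(x)}{1-(1-p)\nGbar(x)},
$$
and the analogous simplification of $1-\overline{G}_U$ leads to $\Lambda_{G_U}(x)=\frac1p\,\Lambda_{\nG}(x)=\dfrac{\beta}{p}\left((\alpha+\Lambda_F(x))^\theta-\alpha^\theta\right)$, which is the odds function of $G_{\alpha,\beta/p,\theta}$.

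There is no genuine obstacle in this argument: once one passes to odds functions it is essentially a two-line computation, and the only point requiring care is the algebraic simplification of the factor $1-(1-p)G$ (respectively $1-(1-p)\Gbar$) in each case. The conceptual content lies entirely in recognizing that taking the maximum multiplies the odds function by $p$ while taking the minimum divides it by $p$, and that the d-oMO parametrization is stable under exactly these multiplicative operations through the parameter $\beta$. This is the odds-function analogue of the survival-function stability that underlies the classical geometric extreme stability of \cite{marshall1997}.
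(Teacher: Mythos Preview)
Your argument is correct and follows essentially the same route as the paper: condition on $N$, sum the resulting geometric series, and identify the outcome as a member of the family. The only cosmetic difference is that the paper plugs in the explicit survival function (\ref{eq:nGbar12alt}) to recognise the result, whereas you pass to odds functions and use the linearity of (\ref{eq:def2}) in $\beta$; the latter is a clean touch, but the substance is the same.
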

\begin{proof}
Proceeding by conditioning, the distribution function of $U$ is
$$
\Fbar_U(x)=\sum_{n=1}^\infty \nGbar^n(x)p(1-p)^{n-1}=\frac{p\nGbar(x)}{1-(1-p)\nGbar(x)}.
$$
Using now the representation for $\nGbar$ that follows from (\ref{eq:nGbar12alt}),
the result is immediate. The case of $V$ is treated analogously.
\end{proof}

\subsection{Preservation of monotonicity properties}
Given the expressions above, we have the following representation for the hazard rate function {for the distributions in the d-oMO model}:
$$
h_{\nG}(x) = \frac{g_{\alpha,\beta,\theta}(x)}{\nGbar(x)}
= \frac{\beta \theta(\alpha+\Lambda_F(x))^{\theta-1}}{1+\beta\left (\left(\alpha+\Lambda_F(x)\right)^\theta-\alpha^\theta\right)}\cdot\frac{h_{F}(x)}{\Fbar(x)} = \beta \theta h_{F}(x) T_{\alpha,\beta,\theta}(\Lambda_F(x)),
$$
where
\begin{equation}
\label{eq:W}
T_{\alpha,\beta,\theta}(x) = \frac{\left (\alpha+x\right )^{\theta-1}(x+1)}{1+\beta\left (\left(\alpha+x\right)^\theta-\alpha^\theta\right )}, \quad {x\geq 0}.
\end{equation}
Hence, we may prove monotonicity properties for $h_{\nG}$ looking at the monotonicity of $T_{\alpha,\beta,\theta}$, which will be addressed via $U_{\alpha,\beta,\theta}(x)=\frac{1}{T_{\alpha,\beta,\theta}(x)}$, for simplicity.
After differentiation and some simple  algebraic manipulation, one gets
{$U_{\alpha,\beta,\theta}^\prime(x)=\frac{D_{\alpha,\beta,\theta}(x)}{(x+1)^2(\alpha+x)^{\theta}}$,}
where $D_{\alpha,\beta,\theta}(x)= \beta(1-\alpha)(\alpha+x)^\theta+(\beta \alpha^\theta-1)(\theta x+\alpha+\theta-1)$, and the sign of $U_{\alpha,\beta,\theta}^\prime$ coincides with the sign of $D_{\alpha,\beta,\theta}$. We have that $D_{\alpha,\beta,\theta}^\prime(x)=\theta \beta(1-\alpha)(\alpha+x)^{\theta-1}+\theta (\beta \alpha^\theta-1)$ and $D_{\alpha,\beta,\theta}^{\prime\prime}(x)=(1-\alpha)\beta\theta(\theta-1)(\alpha+x)^{\theta-2}$. Therefore, $D_{\alpha,\beta,\theta}^{\prime\prime}(x)\eqs {\rm sgn}((1-\alpha)(\theta-1))$, so $D_{\alpha,\beta,\theta}^\prime$ is either increasing or decreasing. Now, the sign of $D_{\alpha,\beta,\theta}(0)=\alpha^\theta\beta\theta-\alpha-(\theta-1)$ will play a significant role.
\begin{theorem}
\label{thm:GLL-IHR}
Let $\nG$ be given by (\ref{eq:def2}) and $D_{\alpha,\beta,\theta}$ be the polynomial defined above.
    \begin{enumerate}
    \item
    If $D_{\alpha,\beta,\theta}(0)<0$, $(1-\alpha)(\theta-1)<0$, and $F\in\IHR$, then $\nG\in\IHR$.
    \item
    If $D_{\alpha,\beta,\theta}(0)>0$, $(1-\alpha)(\theta-1)>0$, and $F\in\DHR$, then $\nG\in\DHR$.
    \item
    If $\alpha=1$ or $\theta=1$, 
    {$\beta\leq 1$} and $F\in\IHR$, then $\nG\in\IHR$.
    \item
    If $\alpha=1$ or $\theta=1$, 
    {$\beta\geq1$} and $F\in\DHR$, then $\nG\in\DHR$.
    \end{enumerate}
\end{theorem}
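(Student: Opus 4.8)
The plan is to reduce the whole statement to a sign analysis of the single function $D_{\alpha,\beta,\theta}$ on $[0,\infty)$, exploiting the factorisation $h_{\nG}(x)=\beta\theta\,h_F(x)\,T_{\alpha,\beta,\theta}(\Lambda_F(x))$ established above. The three ingredients I would combine are: (i) $\Lambda_F$ is always increasing and nonnegative; (ii) $T_{\alpha,\beta,\theta}>0$ on $[0,\infty)$, since its numerator is positive and its denominator $1+\beta((\alpha+x)^\theta-\alpha^\theta)$ is $\geq 1$ there; and (iii) a product of two nonnegative increasing (resp.\ decreasing) functions is increasing (resp.\ decreasing). Consequently, if $T_{\alpha,\beta,\theta}$ is increasing and $F\in\IHR$, then $h_{\nG}$ is increasing, so $\nG\in\IHR$; symmetrically, if $T_{\alpha,\beta,\theta}$ is decreasing and $F\in\DHR$, then $\nG\in\DHR$. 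Since $T_{\alpha,\beta,\theta}=1/U_{\alpha,\beta,\theta}$ and the sign of $U_{\alpha,\beta,\theta}^\prime$ coincides with that of $D_{\alpha,\beta,\theta}$ (as computed above), the task becomes: show $D_{\alpha,\beta,\theta}\leq 0$ on $[0,\infty)$ for parts 1 and 3, and $D_{\alpha,\beta,\theta}\geq 0$ for parts 2 and 4.

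Parts 3 and 4 I would dispatch by direct substitution. Setting $\theta=1$ collapses $D_{\alpha,\beta,\theta}(x)$ to $(\beta-1)(\alpha+x)$, and setting $\alpha=1$ collapses it to $\theta(\beta-1)(x+1)$; in both cases $D_{\alpha,\beta,\theta}$ is a strictly positive factor times $(\beta-1)$. Hence $\beta\leq 1$ gives $D_{\alpha,\beta,\theta}\leq 0$, so $T_{\alpha,\beta,\theta}$ is increasing and the $\IHR$ conclusion follows from $F\in\IHR$; while $\beta\geq 1$ gives $D_{\alpha,\beta,\theta}\geq 0$, so $T_{\alpha,\beta,\theta}$ is decreasing and the $\DHR$ conclusion follows from $F\in\DHR$.

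Parts 1 and 2 carry the real work, since there one must control the global sign of $D_{\alpha,\beta,\theta}$ on the whole half-line from boundary data. Here I would use that $D_{\alpha,\beta,\theta}^{\prime\prime}$ has constant sign equal to that of $(1-\alpha)(\theta-1)$, so $D_{\alpha,\beta,\theta}^\prime$ is monotone, together with the boundary values $D_{\alpha,\beta,\theta}(0)=\alpha^\theta\beta\theta-\alpha-(\theta-1)$ and the simplification $D_{\alpha,\beta,\theta}^\prime(0)=\theta(\beta\alpha^{\theta-1}-1)$ (the cross terms cancel). The linchpin is an algebraic incompatibility: $D_{\alpha,\beta,\theta}(0)<0$ and $D_{\alpha,\beta,\theta}^\prime(0)>0$ cannot hold together unless $(\alpha-1)(\theta-1)<0$, because $D_{\alpha,\beta,\theta}^\prime(0)>0$ gives $\beta\alpha^{\theta}>\alpha$, $D_{\alpha,\beta,\theta}(0)<0$ gives $\beta\alpha^{\theta}<(\alpha+\theta-1)/\theta$, and chaining forces $\alpha\theta<\alpha+\theta-1$. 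For part 1 the hypothesis $(1-\alpha)(\theta-1)<0$ means $(\alpha-1)(\theta-1)>0$, so this rules out $D_{\alpha,\beta,\theta}^\prime(0)>0$; thus $D_{\alpha,\beta,\theta}^\prime(0)\leq 0$, and since $D_{\alpha,\beta,\theta}^{\prime\prime}<0$ makes $D_{\alpha,\beta,\theta}^\prime$ decreasing, $D_{\alpha,\beta,\theta}^\prime\leq 0$ throughout, so $D_{\alpha,\beta,\theta}(x)\leq D_{\alpha,\beta,\theta}(0)<0$; this makes $T_{\alpha,\beta,\theta}$ increasing and yields $\nG\in\IHR$. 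Part 2 is the exact mirror: $D_{\alpha,\beta,\theta}(0)>0$ with $D_{\alpha,\beta,\theta}^\prime(0)<0$ would force $(\alpha-1)(\theta-1)>0$, contradicting $(1-\alpha)(\theta-1)>0$; hence $D_{\alpha,\beta,\theta}^\prime(0)\geq 0$, the now-increasing $D_{\alpha,\beta,\theta}^\prime$ stays nonnegative, $D_{\alpha,\beta,\theta}(x)\geq D_{\alpha,\beta,\theta}(0)>0$, $T_{\alpha,\beta,\theta}$ is decreasing, and $\nG\in\DHR$.

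The hard part is precisely this incompatibility for parts 1 and 2: it is what excludes the dangerous scenario in which $D_{\alpha,\beta,\theta}$ starts with the right sign at $0$ but then bulges across zero further out. I would also note at the outset that the chaining argument multiplies by $\alpha$, which is legitimate because $\alpha>0$ is automatic under the hypotheses: if $\alpha=0$ then $D_{\alpha,\beta,\theta}(0)=1-\theta$, whose sign is fixed by $\theta$ in a way incompatible with the sign conditions imposed in parts 1 and 2, so those parts are vacuous for $\alpha=0$.
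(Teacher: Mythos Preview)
Your proposal is correct and follows the paper's approach exactly: reduce everything to the sign of $D_{\alpha,\beta,\theta}$ on $[0,\infty)$ via the factorisation $h_{\nG}=\beta\theta\,h_F\,T_{\alpha,\beta,\theta}(\Lambda_F)$ and $T_{\alpha,\beta,\theta}=1/U_{\alpha,\beta,\theta}$. In fact you supply more than the paper does: the paper's proof simply \emph{asserts} that ``$D(x)<0$ for every $x>0$'' in case~1 (and analogously for the remaining cases) without further argument, whereas your algebraic incompatibility step---showing that $D_{\alpha,\beta,\theta}(0)<0$ and $D_{\alpha,\beta,\theta}^\prime(0)>0$ together force $(\alpha-1)(\theta-1)<0$, which contradicts the hypothesis $(1-\alpha)(\theta-1)<0$, so that $D_{\alpha,\beta,\theta}^\prime(0)\leq 0$ and hence (since $D_{\alpha,\beta,\theta}^{\prime\prime}<0$) $D_{\alpha,\beta,\theta}^\prime\leq 0$ and $D_{\alpha,\beta,\theta}\leq D_{\alpha,\beta,\theta}(0)<0$ on all of $[0,\infty)$---is precisely the missing justification that makes that assertion rigorous.
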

\begin{proof}
In the first case, $D(x)<0$ for every $x>0$. Hence $U_{\alpha,\beta,\theta}^\prime$ is always negative, so $U_{\alpha,\beta,\theta}$ is decreasing and, therefore, $T_{\alpha,\beta,\theta}(x)=\frac1{U_{\alpha,\beta,\theta}(x)}$ is increasing, so the conclusion is straightforward. The remaining cases are analogous, reversing signs and monotonicity directions for cases 2. and 4.
\end{proof}

Note that this result extends Theorem~\ref{thm:classes}, allowing now for an interplay of the different parameters. 
{Moreover, it is the presence of the parameter $\alpha$ that allows concluding about the monotonicity of the hazard rate for $\theta\ne1$, which was out of reach in Theorem~\ref{thm:classes}.}

The preservation of the monotonicity of the odds ratio is easily described in analogous terms, extending the final part of Theorem~\ref{thm:classes}.

\begin{theorem}\
\label{thm:GLL-IOR}
    \begin{enumerate}
    \item
    If $\theta\geq1$ and $F\in\IOR$, then $\nG\in\IOR$.
    \item
    If $\theta\leq1$ and $F\in\DOR$, then $\nG\in\DOR$.
    \end{enumerate}
\end{theorem}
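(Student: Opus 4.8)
The plan is to reduce both claims to convexity statements about the odds function and then invoke an elementary composition rule. Recall from the discussion following Definition~\ref{def:classes} that $F\in\IOR$ ($F\in\DOR$) is equivalent to $\Lambda_F$ being convex (concave). Hence it will suffice to show that $\Lambda_{\nG}$ is convex under the hypotheses of part~1, and concave under those of part~2.

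First I would isolate the outer transformation. Writing $\phi_{\alpha,\theta}(u)=(\alpha+u)^\theta-\alpha^\theta$ for $u\geq 0$, equation (\ref{eq:def2}) reads $\Lambda_{\nG}=\beta\,\phi_{\alpha,\theta}\circ\Lambda_F$. Since $\phi_{\alpha,\theta}'(u)=\theta(\alpha+u)^{\theta-1}>0$, the map $\phi_{\alpha,\theta}$ is increasing on $[0,\infty)$ for every admissible $\alpha\geq 0$ and $\theta>0$; and since $\phi_{\alpha,\theta}''(u)=\theta(\theta-1)(\alpha+u)^{\theta-2}$, it is convex when $\theta\geq 1$ and concave when $\theta\leq 1$.

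The decisive step is the composition rule: an increasing convex function composed with a convex function is convex, and an increasing concave function composed with a concave function is concave. For part~1, $\theta\geq1$ makes $\phi_{\alpha,\theta}$ increasing and convex and $F\in\IOR$ makes $\Lambda_F$ convex, so $\phi_{\alpha,\theta}\circ\Lambda_F$ is convex; the factor $\beta>0$ preserves convexity, giving $\nG\in\IOR$. Part~2 is symmetric: $\theta\leq1$ makes $\phi_{\alpha,\theta}$ increasing and concave, $F\in\DOR$ makes $\Lambda_F$ concave, so the composition is concave and $\nG\in\DOR$.

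I expect no genuine difficulty here; the only point requiring attention is that the composition rule needs the outer function to be simultaneously monotone and of the right convexity. It is precisely the fact that $\phi_{\alpha,\theta}$ is increasing for all parameter values that makes the sign of $\theta-1$ the sole determining factor, and the argument as stated avoids any differentiability assumption on $\Lambda_F$. As a concrete alternative, when $\Lambda_F$ is twice differentiable one may verify the claim directly from
$$
\Lambda_{\nG}''(x)=\beta\theta(\alpha+\Lambda_F(x))^{\theta-2}\Big[(\theta-1)\big(\Lambda_F'(x)\big)^2+(\alpha+\Lambda_F(x))\,\Lambda_F''(x)\Big],
$$
whose sign is dictated by those of $\theta-1$ and $\Lambda_F''$.
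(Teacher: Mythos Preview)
Your proof is correct and essentially coincides with the paper's argument: the paper writes $\lambda_{\nG}^\prime(x)\eqs\lambda_F^\prime(x)(\alpha+\Lambda_F(x))+(\theta-1)\lambda_F^2(x)$, which is exactly your displayed formula for $\Lambda_{\nG}''$ after cancelling the positive factor $\beta\theta(\alpha+\Lambda_F)^{\theta-2}$ and recalling $\lambda_F=\Lambda_F'$. Your framing via the composition rule (increasing convex composed with convex is convex) is a modest bonus, as it sidesteps the need to assume twice differentiability of $\Lambda_F$, but the substance is the same.
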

\begin{proof}
Just note that $\lambda_{\nG}^\prime(x) \eqs \lambda_F^\prime(x)(\alpha+\Lambda_F(x))+(\theta-1)\lambda_F^2(x)$, and the conclusion is immediate.
\end{proof}
	
\subsection{Stochastic comparisons between $\nG$ and $F$}
We now address some stochastic ordering relations between the baseline distribution $F$ and the family of transformed distributions $\nG$ {in the d-oMO model}.
\begin{theorem}\
\label{thm:GLLst}
    \begin{enumerate}
    \item
    If $\theta>1$ and $\alpha^{\theta-1}\beta\theta>1$, then $\nG\leqst F$.
    \item
    If $\theta<1$ and $\alpha^{\theta-1}\beta\theta<1$, then $\nG\geqst F$.
    \end{enumerate}
\end{theorem}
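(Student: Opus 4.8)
The plan is to reduce the stochastic comparison to a pointwise inequality between odds functions via Proposition~\ref{prop:oddsa}, which states that $\nG\leqst F$ is equivalent to $\Lambda_{\nG}(x)\geq\Lambda_F(x)$ for all $x$ (and $\nG\geqst F$ to the reverse inequality). Since, by Definition~\ref{def:model2}, $\Lambda_{\nG}(x)=\beta\bigl((\alpha+\Lambda_F(x))^\theta-\alpha^\theta\bigr)$, writing $t=\Lambda_F(x)$ and introducing the single-variable function
\begin{equation*}
\phi(t)=\beta\bigl((\alpha+t)^\theta-\alpha^\theta\bigr)-t,
\end{equation*}
the whole statement becomes: show $\phi(t)\geq0$ for all admissible $t$ under the hypotheses of part~1, and $\phi(t)\leq0$ under those of part~2. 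Because $\Lambda_F$ maps the support of $F$ onto an interval contained in $[0,\infty)$ (with $\Lambda_F\to0$ at the lower end of the support), it suffices to establish the sign of $\phi$ on all of $[0,\infty)$.

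The argument is then a two-derivative convexity analysis anchored at $t=0$. First I would record the boundary value $\phi(0)=\beta(\alpha^\theta-\alpha^\theta)-0=0$, so the sign of $\phi$ away from the origin is governed purely by its monotonicity. Next, $\phi'(t)=\beta\theta(\alpha+t)^{\theta-1}-1$, so $\phi'(0)=\beta\theta\alpha^{\theta-1}-1$; the crucial observation is that the hypothesis $\alpha^{\theta-1}\beta\theta>1$ (resp.\ $<1$) is exactly the statement $\phi'(0)>0$ (resp.\ $\phi'(0)<0$). Finally, $\phi''(t)=\beta\theta(\theta-1)(\alpha+t)^{\theta-2}$, whose sign equals that of $\theta-1$ on $(0,\infty)$.

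For part~1 ($\theta>1$) we get $\phi''>0$, so $\phi'$ is increasing; combined with $\phi'(0)>0$ this forces $\phi'(t)>0$ for all $t\geq0$, hence $\phi$ is increasing and, since $\phi(0)=0$, we conclude $\phi\geq0$, i.e.\ $\Lambda_{\nG}\geq\Lambda_F$, giving $\nG\leqst F$. For part~2 ($\theta<1$) the signs reverse: $\phi''<0$ makes $\phi'$ decreasing, so $\phi'(0)<0$ yields $\phi'<0$ throughout, $\phi$ is decreasing, and $\phi\leq0$, i.e.\ $\Lambda_{\nG}\leq\Lambda_F$, giving $\nG\geqst F$. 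I do not expect a genuine obstacle here; the only points requiring care are noticing that $\phi''$ has constant sign dictated solely by $\theta-1$ (so that the single anchoring condition at $t=0$ propagates to all $t$), confirming that the range of $\Lambda_F$ lies in $[0,\infty)$ so the pointwise inequality is checked on the right domain, and observing that part~1 implicitly requires $\alpha>0$, since for $\theta>1$ the condition $\alpha^{\theta-1}\beta\theta>1$ cannot hold when $\alpha=0$.
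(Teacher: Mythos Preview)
Your proposal is correct and follows essentially the same route as the paper. The paper writes $\nGbar(x)-\Fbar(x)=H(\Lambda_F(x))$ and observes that $H$ has the same sign as $P(t)=t-\beta(\alpha+t)^\theta+\alpha^\theta\beta$, which is exactly $-\phi(t)$ in your notation; the two-derivative analysis (anchoring at $t=0$ via $P(0)=0$ and $P'(0)$, with $P''$ of constant sign determined by $\theta-1$) is identical to yours, only with all signs flipped.
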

\begin{proof}
We need to characterise the sign of
$$
\nGbar(x)-\Fbar(x) 
=  \frac{1}{1+\beta((\alpha+\Lambda_F(x))^\theta - \alpha^\theta)} - \frac{1}{\Lambda_F(x) +1} = H(\Lambda_F(x)),
$$
where $H(x) = \frac{1}{1+\beta((\alpha+x)^\theta - \alpha^\theta)} - \frac{1}{x +1}$, which has the same sign variation as $P(x) = x-\beta(\alpha+x)^{\theta} +\alpha^\theta \beta$.
After differentiation, we have $P^\prime(x) = 1-\theta\beta(\alpha+x)^{\theta-1}$ and $P^{\prime\prime}(x) = -\beta\theta(\theta-1)(\alpha +x)^{\theta-1}$. When $\theta>1$, $P^{\prime\prime}(x)<0$, so $P^\prime(x)$ is decreasing. If $P^\prime(0) = 1-\beta\theta \alpha^{\theta-1} < 0$ it follows that $P^\prime(x)<0$, for every $x>0$, hence $P(x)$ is decreasing. Since  that $P(0) = 0$ we have the negativeness of $P(x)$. The case $\theta<1$ is handled analogously.
\end{proof}

Sufficient conditions for the hazard rate order follow immediately by remarking that
$$
H^\ast(x)=\frac{h_{\nG}(x)}{h_F(x)} = \beta\theta T_{\alpha,\beta,\theta}(\Lambda_F(x)),
$$
where $T_{\alpha,\beta.\theta}$ is defined by (\ref{eq:W}). Noting that $H^\ast(0)=\alpha^{\theta-1}$, taking into account the properties of $T_{\alpha,\beta.\theta}$ mentioned above, the following statement is obvious.
\begin{theorem}\
    \begin{enumerate}
    \item
    If $\alpha^{\theta-1}>1$ and $T_{\alpha,\beta,\theta}$ is increasing, then $\nG\leqhr F$.
    \item
    If $\alpha^{\theta-1}<1$ and $T_{\alpha,\beta,\theta}$ is decreasing, then $\nG\geqhr F$.
    \end{enumerate}
\end{theorem}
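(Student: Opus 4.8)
The plan is to convert the hazard rate order into a pointwise statement about the ratio $H^\ast(x)=\frac{h_{\nG}(x)}{h_F(x)}$ and then read off the conclusion from the monotonicity of $H^\ast$. By Definition~\ref{def:order1}, $\nG\leqhr F$ is equivalent to $h_{\nG}(x)\geq h_F(x)$ for every $x$, that is, to $H^\ast(x)\geq 1$ on the whole support; symmetrically, $\nG\geqhr F$ amounts to $H^\ast(x)\leq 1$. Since $h_F$ is positive wherever it is defined, this reformulation is legitimate, and the entire argument reduces to locating the infimum (respectively supremum) of $H^\ast$.

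First I would record the representation $H^\ast(x)=\beta\theta\,T_{\alpha,\beta,\theta}(\Lambda_F(x))$ with $T_{\alpha,\beta,\theta}$ given by (\ref{eq:W}). The structural fact I would rely on is that the odds function $\Lambda_F$ is always increasing and vanishes at the left endpoint of the support of $F$, so its range is $[0,\infty)$. Consequently $H^\ast$ is a composition of the increasing map $\Lambda_F$ with the one-variable function $T_{\alpha,\beta,\theta}$, whose monotonicity has already been settled through the analysis of $U_{\alpha,\beta,\theta}=1/T_{\alpha,\beta,\theta}$ and the sign of $D_{\alpha,\beta,\theta}$. This is precisely why the two hypotheses of the theorem are phrased in terms of $T_{\alpha,\beta,\theta}$ being increasing or decreasing.

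For part 1, assuming $T_{\alpha,\beta,\theta}$ is increasing, the composition $H^\ast$ is increasing in $x$, so it attains its smallest value as $\Lambda_F(x)\to 0$, namely the boundary value $H^\ast(0)=\alpha^{\theta-1}$. Hence $H^\ast(x)\geq\alpha^{\theta-1}>1$ for every $x$, which gives $h_{\nG}\geq h_F$ and therefore $\nG\leqhr F$. Part 2 is the mirror image: if $T_{\alpha,\beta,\theta}$ is decreasing then $H^\ast$ is decreasing, its largest value is again $H^\ast(0)=\alpha^{\theta-1}<1$, so $H^\ast(x)\leq\alpha^{\theta-1}<1$ throughout, yielding $h_{\nG}\leq h_F$ and $\nG\geqhr F$.

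The computation is routine, since the genuine content—the monotonicity of $T_{\alpha,\beta,\theta}$—was already extracted from the sign of $D_{\alpha,\beta,\theta}$; this statement is essentially a corollary. The only points demanding care are bookkeeping ones. The first is keeping the direction of the hazard rate order straight, because $\leqhr$ reverses the pointwise inequality between hazard rates. The second, and the real crux, is justifying that the extremum of $H^\ast$ sits at the left endpoint: this rests on combining the monotonicity of $\Lambda_F$ with the (assumed) monotonicity of $T_{\alpha,\beta,\theta}$, and on evaluating the boundary limit correctly to obtain $H^\ast(0)=\alpha^{\theta-1}$. Once these are in place, the two boundary conditions $\alpha^{\theta-1}>1$ and $\alpha^{\theta-1}<1$ propagate to all $x$ and the statement follows.
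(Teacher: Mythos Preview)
Your approach is exactly the paper's: reduce the comparison to the monotonicity of $H^\ast(x)=\beta\theta\,T_{\alpha,\beta,\theta}(\Lambda_F(x))$, then control the whole function by its value at the left endpoint. The strategy is sound and matches the text preceding the theorem verbatim.

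There is, however, an arithmetic slip that you inherit from the paper. You write $H^\ast(0)=\alpha^{\theta-1}$, but since $T_{\alpha,\beta,\theta}(0)=\alpha^{\theta-1}$ and $H^\ast=\beta\theta\,T_{\alpha,\beta,\theta}\circ\Lambda_F$, the correct boundary value is $H^\ast(0)=\beta\theta\,\alpha^{\theta-1}$ (compare the condition $\alpha^{\theta-1}\beta\theta>1$ in Theorem~\ref{thm:GLLst}). With the hypothesis as stated, the argument can fail: take $\alpha=2$, $\theta=2$, $\beta=\tfrac{1}{10}$; then $D_{\alpha,\beta,\theta}(x)=-0.1x^2-1.6x-2.2<0$, so $T_{\alpha,\beta,\theta}$ is increasing and $\alpha^{\theta-1}=2>1$, yet $H^\ast(0)=\beta\theta\alpha^{\theta-1}=0.4<1$, and $H^\ast$ rises to $\theta=2$ at infinity, crossing $1$. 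Hence $h_{\nG}\geq h_F$ does not hold throughout and $\nG\leqhr F$ fails. The fix is simply to replace the boundary condition by $\beta\theta\,\alpha^{\theta-1}\geq 1$ (respectively $\leq 1$); your proof then goes through unchanged.
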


For a characterisation of the monotonicity of $T_{\alpha,\beta,\theta}$, please see the discussion {preceding} Theorem~\ref{thm:GLL-IHR}.

Now, the likelihood order follows easily.

\begin{theorem}\
    \begin{enumerate}
    \item
    Assume that $\theta>1$ and $F\leqhr\nG$. Then $\nG{\leqlr} F$.
    \item
    Assume that $\theta<1$ and $F\geqhr\nG$. Then $\nG{\geqlr} F$.
    \end{enumerate}
\end{theorem}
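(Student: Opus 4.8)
The plan is to prove the likelihood ratio comparison directly, by showing that the ratio $g_{\alpha,\beta,\theta}/f$ of the two densities is monotone and then reading off the order from the direction of that monotonicity. Starting from the density formula for $g_{\alpha,\beta,\theta}$ derived above and substituting $h_F(x)=f(x)/\Fbar(x)$, the baseline density cancels and one is left with the clean factorisation
\[
\frac{g_{\alpha,\beta,\theta}(x)}{f(x)} = \beta\theta\,(\alpha+\Lambda_F(x))^{\theta-1}\left(\frac{\nGbar(x)}{\Fbar(x)}\right)^{2}.
\]
This identity is the whole point: it reduces the statement to controlling the monotonicity of the two nonnegative factors, the power $(\alpha+\Lambda_F(x))^{\theta-1}$ and the tail ratio $\nGbar(x)/\Fbar(x)$.

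The second factor is governed precisely by the hazard rate hypothesis. Since $\frac{d}{dx}\log\frac{\nGbar(x)}{\Fbar(x)} = h_F(x)-h_{\nG}(x)$, the assumption $F\leqhr\nG$ is equivalent to $\nGbar/\Fbar$ being increasing, and $F\geqhr\nG$ to it being decreasing; squaring preserves this, as the ratio is nonnegative. For the first factor, recall that $\Lambda_F$ is always increasing, so $\alpha+\Lambda_F$ is positive and increasing, and raising to the exponent $\theta-1$ produces an increasing factor exactly when $\theta>1$ and a decreasing factor when $\theta<1$.

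Combining the two, under $\theta>1$ together with the hazard rate hypothesis that makes $\nGbar/\Fbar$ increasing, the likelihood ratio is a product of increasing nonnegative functions and is therefore increasing; under $\theta<1$ with the hypothesis making $\nGbar/\Fbar$ decreasing, it is a product of decreasing nonnegative functions and is decreasing. In each case Definition~\ref{def:order1} then yields the likelihood ratio order between $F$ and $\nG$, in the direction dictated by this common monotonicity. As a consistency check, this direction is the one forced by the implication $\leqlr\Rightarrow\leqhr$, so it necessarily agrees with the direction of the assumed hazard rate order.

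I do not expect a genuine obstacle once the factorisation is written down: no integration or delicate estimate is required, only the observation that the likelihood ratio splits as a product of two monotone nonnegative factors. The point demanding care is purely the bookkeeping of directions — matching the sign of $\theta-1$ to the direction of the hazard rate hypothesis so that both factors move the same way, and keeping track of which density plays the role of $f_1$ and $f_2$ in Definition~\ref{def:order1}. It is worth stressing that the hazard rate assumption is load-bearing rather than incidental: for a general baseline $F$ the ratio $\nGbar/\Fbar$ need not be monotone, and it is exactly this monotonicity that the hypothesis supplies.
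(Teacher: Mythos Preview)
Your proof is correct and identical to the paper's: both write $g_{\alpha,\beta,\theta}/f=\beta\theta(\alpha+\Lambda_F)^{\theta-1}(\nGbar/\Fbar)^2$ and read off the monotonicity of each factor from the sign of $\theta-1$ and the hazard rate hypothesis, respectively. Your consistency remark is well taken --- the argument actually delivers $F\leqlr\nG$ under the hypotheses of part~1 (matching the assumed $F\leqhr\nG$), so the displayed direction in the statement appears to be a misprint rather than a flaw in the method.
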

\begin{proof}
Note that
$$
\frac{g_{\alpha,\beta,\theta}(x)}{f(x)} = \frac{\beta\theta(\alpha+\Lambda_F(x))^{\theta} \Gbar^2(x)}{\Fbar(x)(\alpha\Fbar(x)+F(x))} = \beta\theta(\alpha+\Lambda_F(x))^{\theta-1}\left(\frac{\Gbar(x)}{\Fbar(x)}\right)^2.
$$
The monotonicity of the first parenthesis of the final expression on the right is fully defined by the sign of the exponent, while the monotonicity of the second term depends on monotonicity the hazard rate order between the distributions $F$ and $\nG$.
\end{proof}

\section{An enlarged log-logistic family of distributions}
\label{sec:GLL}

We have treated the {oMO and d-oMO} models, introduced in Definitions~\ref{def:model1} and \ref{def:model2} by defining distributions through their odds functions, {} as distortions of some given underlying odds function $\Lambda_F$. Naturally, we may instead consider the new odds function as a distortion of the initial distribution function $F$. We mentioned, just before Definition~\ref{def:model2}, that the odds function for the PHR model corresponds to transforming $F$ by {the} quantile function {of a Pareto distribution}. This approach may be extended to the full class of models considered in Definition~\ref{def:model2}, leading to the introduction of a new family of distributions.

\begin{definition}
\label{def:GLL}
{The} enlarged log-logistic distribution with parameters $\alpha\geq0$, $\beta,\,\theta>0$, denoted with $\GLL(\alpha,\beta,\theta)$ has distribution function
\begin{equation}
\label{eq:GLL}
\cdfgll(x)=1-\frac{1}{\left(\frac{x}{\beta}+\alpha^\theta\right)^{\frac{1}{\theta}}+1-\alpha}, \quad x\geq 0.
\end{equation}
\end{definition}

The parameters $\beta$ and $\frac1\theta$ are obviously scale and shape parameters, respectively, and $\alpha$ is a second shape parameter, having an effect on the asymmetry, skewness, and tail weight of the distribution.
Moreover, it is straightforward to verify that $K_{0,1,1}$ is the standard log-logistic, already introduced before and denoted with $\mathcal{L}$, while $K_{0,\beta,\theta}$ represents the log-logistic with distribution function $\mathcal{L}_{\beta,\frac1\theta}(x)=1-\left((\frac{x}{\beta})^{\frac1\theta}+1\right)^{-1}$. {Moreover, the distributions $K_{1,\beta,\theta}$ correspond to the Pareto family.}

Explicit expressions for the density $k_{\alpha,\beta,\theta}$, hazard rate $h_{\alpha,\beta,\theta}$, and quantile function $\cdfgll^{-1}$ for the distribution function $\cdfgll$ are given below:
$$
k_{\alpha,\beta,\theta}(x)
=\frac{\left(\frac{x}{\beta}+\alpha^\theta\right)^{\frac{1}{\theta}-1}}
{\beta\theta\left(\left(\frac{x}{\beta}+\alpha^\theta\right)^{\frac{1}{\theta}}+1-\alpha\right)^2},
\qquad
h_{\alpha,\beta,\theta}(x) =
\frac{\left(\frac{x}{\beta}+\alpha^\theta\right)^{\frac{1}{\theta}-1}}
{\beta\theta\left(\left(\frac{x}{\beta}+\alpha^\theta\right)^{\frac{1}{\theta}}+1-\alpha\right)}, \quad x\geq 0.
$$
and
$$
\cdfgll^{-1}(u)=\beta\left(\left(\frac{1}{1-u}+\alpha-1\right)^\theta-\alpha^\theta\right),\quad u\in[0,1].
$$
{An illustration of the effect of the parameters is shown in Figure~\ref{fig6-8}, describing the behaviour of the density function. It is clear that mass concentrates near the origin, with a shift to the right for $\alpha<1$. On the other hand, increasing $\theta$ produces heavier tails.}
\begin{figure}
\centering
\includegraphics[scale=.27]{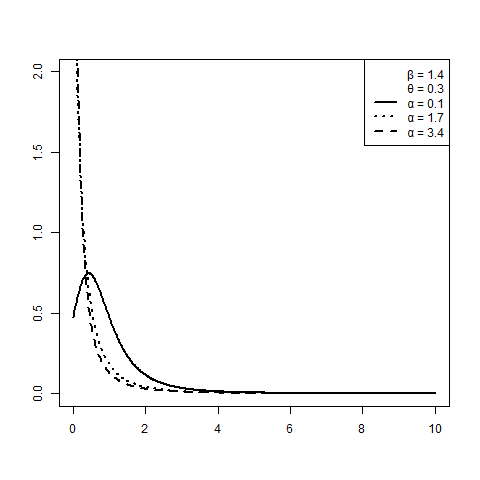}\includegraphics[scale=.27]{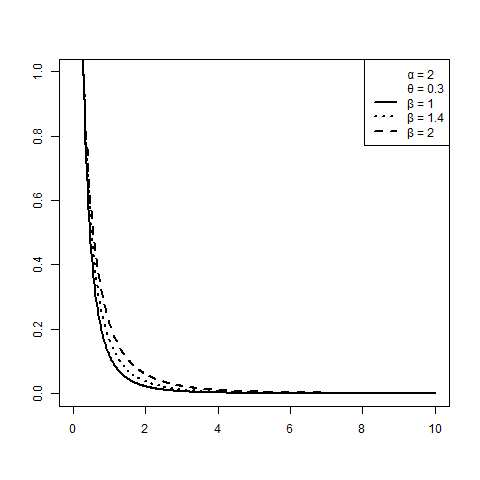}\includegraphics[scale=.27]{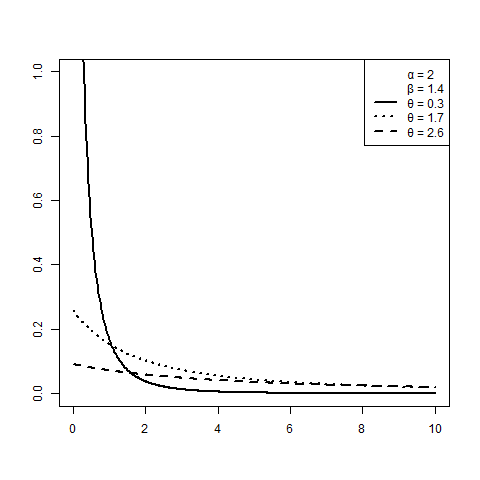}
\caption{Some plots for the density of $\cdfgll$.}
\label{fig6-8}
\end{figure}

{It is now obvious} that $\Lambda_{\nG}(x)=\cdfgll^{-1}\circ F(x)$. Therefore, ordering properties within the $\cdfgll$ family translate easily {into the} distributions {in the d-oMO model}, the convexity of the odds of $\nG$ being the most obvious, corresponding to the convex transform order between $\cdfgll$ and $F$.
In other words, the convexity properties of the baseline distribution $F$ with respect to $\cdfgll$ are inherited by $\nG$. Recall that the convexity of the odds {function} defines interesting classes, namely the IOR and DOR families of distributions (see \cite{oddspaper}). This naturally leads to an interest in exploring stochastic ordering relationships within the family of distributions defined by (\ref{eq:GLL}).

The increasingness of the hazard rate or the odds rate is simple to characterise, as described next.


{\samepage
\begin{theorem}\
\label{thm:GLL-IHRIOR}
    \begin{enumerate}
    \item
    If $\alpha+\theta>1$ then $\cdfgll\in\DHR$, for every $\beta>0$.
    \item
    If $\theta\leq1$ then $\cdfgll\in\IOR$, while if $\theta\geq1$ then $\cdfgll\in\DOR$.
    \end{enumerate}
\end{theorem}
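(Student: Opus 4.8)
The plan is to handle the two parts separately, in each case reducing the claim to the monotonicity of a single explicit one-variable function built from the formulas for $h_{\alpha,\beta,\theta}$ and for the odds function of $\cdfgll$ already displayed above. The simplifying device common to both parts is the substitution $u=\left(\frac{x}{\beta}+\alpha^\theta\right)^{1/\theta}$, which is increasing in $x$ and sweeps out $[\alpha,\infty)$ as $x$ runs over $[0,\infty)$, so that monotonicity in $x$ is equivalent to monotonicity in $u$.

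For part 1, I would rewrite the hazard rate in the variable $u$. Since $\frac{x}{\beta}+\alpha^\theta=u^\theta$, the displayed expression for $h_{\alpha,\beta,\theta}$ becomes $\frac{u^{1-\theta}}{\beta\theta(u+1-\alpha)}$, whose denominator is positive for every $\alpha\geq0$ because $u+1-\alpha\geq1$ on $[\alpha,\infty)$. Differentiating $\phi(u)=\frac{u^{1-\theta}}{u+1-\alpha}$ and clearing the positive factors $u^{-\theta}$ and $(u+1-\alpha)^2$, one finds $\phi'(u)\eqs M(u)$, where $M(u)=-\theta u+(1-\theta)(1-\alpha)$ is affine. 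Because its slope is $-\theta<0$, $M$ is strictly decreasing, so it is nonpositive throughout $[\alpha,\infty)$ exactly when it is nonpositive at the left endpoint, and $M(\alpha)=1-\alpha-\theta$. Hence the hypothesis $\alpha+\theta>1$ forces $M<0$, making $\phi$, and therefore $h_{\alpha,\beta,\theta}$, decreasing, so $\cdfgll\in\DHR$.

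For part 2, I would first note that the odds function collapses to a strikingly simple form: from $\overline{K}_{\alpha,\beta,\theta}(x)=\left(\left(\frac{x}{\beta}+\alpha^\theta\right)^{1/\theta}+1-\alpha\right)^{-1}$ one obtains $\Lambda_{\cdfgll}(x)=\frac1{\overline{K}_{\alpha,\beta,\theta}(x)}-1=\left(\frac{x}{\beta}+\alpha^\theta\right)^{1/\theta}-\alpha$. A single differentiation then gives the odds rate $\lambda_{\cdfgll}(x)=\frac{1}{\beta\theta}\left(\frac{x}{\beta}+\alpha^\theta\right)^{\frac{1-\theta}{\theta}}$. Since the base is increasing in $x$, the monotonicity of $\lambda_{\cdfgll}$ is dictated solely by the sign of the exponent $\frac{1-\theta}{\theta}$: it is nonnegative precisely when $\theta\leq1$, yielding an increasing odds rate and $\cdfgll\in\IOR$, and nonpositive precisely when $\theta\geq1$, yielding $\cdfgll\in\DOR$, with $\theta=1$ the boundary log-logistic case in which $\lambda_{\cdfgll}$ is constant.

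The routine differentiations are not where the difficulty lies. The one step that needs genuine care is the endpoint argument in part 1: recognising that, because $M$ is affine and strictly decreasing, its maximum over $[\alpha,\infty)$ is attained at $u=\alpha$, so the single evaluation $M(\alpha)=1-\alpha-\theta$ controls the sign of $\phi'$ everywhere and pins down the exact threshold $\alpha+\theta>1$. Part 2, once the odds function is simplified, is essentially immediate, and it also clarifies why the log-logistic ($\theta=1$) sits exactly on the boundary between the IOR and DOR regimes.
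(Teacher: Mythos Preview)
Your proof is correct and follows essentially the same route as the paper: for part~1 you differentiate the hazard rate, reduce its sign to an affine expression that is strictly decreasing, and check the left endpoint to obtain the threshold $\alpha+\theta>1$; for part~2 you compute the odds rate explicitly and read off the monotonicity from the sign of the exponent $\frac{1}{\theta}-1$. The only cosmetic difference is your substitution $u=\bigl(\frac{x}{\beta}+\alpha^\theta\bigr)^{1/\theta}$, which streamlines the computation but does not change the argument.
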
}
\begin{proof}
For part 1., note that
$$
h_{\alpha,\beta,\theta}^\prime(x)=
-  \left(\theta\left(\frac{x}{\beta} +\alpha^\theta\right)^{\frac{1}{\theta}} +(1-\alpha)(\theta-1)\right).
$$
Since $h_{\alpha,\beta,\theta}^\prime(0)=-(\alpha+\theta-1)$, it follows that $h_{\alpha,\beta,\theta}^\prime(x)<0$ for every $x\geq 0$, given that $h^\prime$ is decreasing. For part 2., the odds rate of $\cdfgll$ is given by $\lambda_{\cdfgll}(x)=\frac{1}{\beta\theta}\left(\frac{x}{\beta}+\alpha^\theta\right)^{\frac1\theta-1}$, so the conclusion is obvious.
\end{proof}

The following result characterises the $\leqst$-order comparability within the \GLL\ family.
\begin{theorem}
\label{thm:genGLLst}
Assume the parameters $\alpha\geq0$, $\beta,\,\theta>0$ and $\alpha_1\geq0$, $\beta_1,\,\theta_1>0$ of the enlarged log-logistic distribution functions (\ref{eq:GLL}) satisfy one of the following assumptions:
    \begin{description}
    \item[(ST1)]
    $\theta<\theta_1$, 
                 {$\alpha^{\theta-1}\beta\theta < \alpha_1^{\theta_1-1}\beta_1\theta_1$}
    and 
    {$\alpha_1(1-\theta)-\alpha(1-\theta_1)\geq0$};
    \item[(ST2)]
    $\theta=\theta_1$, $\beta<\beta_1$, 
                 {$\alpha^{\theta-1}\beta < \alpha_1^{\theta_1-1}\beta_1$}
    and $
    (1-\theta)\left(\alpha_1^\theta\beta_1-\alpha^\theta\beta\right)>0$.
    \end{description}
Then $\cdfgll\leqst\cdfgllalt$.
\end{theorem}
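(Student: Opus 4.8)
The plan is to turn the stochastic comparison into a pointwise inequality and then reduce everything to the sign of a single difference that vanishes at the origin. By Proposition~\ref{prop:oddsa}, $\cdfgll\leqst\cdfgllalt$ is equivalent to $\Lambda_{\cdfgll}(x)\geq\Lambda_{\cdfgllalt}(x)$ for all $x\geq0$; equivalently, passing to quantile functions, to $\cdfgll^{-1}(u)\leq\cdfgllalt^{-1}(u)$ for all $u\in[0,1)$. I would work with the latter, because the explicit quantile function from Definition~\ref{def:GLL} gives, after the substitution $v=\frac{u}{1-u}\in[0,\infty)$ (the same $v$ for both distributions, since it depends only on $u$), the clean form $\cdfgll^{-1}(u)=\beta\bigl((v+\alpha)^\theta-\alpha^\theta\bigr)$. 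The goal then becomes $Q(v)\geq0$ for all $v\geq0$, where $Q(v)=\beta_1\bigl((v+\alpha_1)^{\theta_1}-\alpha_1^{\theta_1}\bigr)-\beta\bigl((v+\alpha)^\theta-\alpha^\theta\bigr)$, and the decisive structural fact is that $Q(0)=0$.

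Since $Q(0)=0$, it suffices to prove $Q'\geq0$ on $[0,\infty)$. Here $Q'(v)=\beta_1\theta_1(v+\alpha_1)^{\theta_1-1}-\beta\theta(v+\alpha)^{\theta-1}$, so that $Q'(0)=\alpha_1^{\theta_1-1}\beta_1\theta_1-\alpha^{\theta-1}\beta\theta$, which is positive exactly because of the hypothesis comparing the products $\alpha^{\theta-1}\beta\theta$ and $\alpha_1^{\theta_1-1}\beta_1\theta_1$ (its form in (ST2) being the same once $\theta=\theta_1$). To control the sign of $Q'$ for all $v$, I would pass to the logarithmic difference $g(v)=\log\bigl(\beta_1\theta_1(v+\alpha_1)^{\theta_1-1}\bigr)-\log\bigl(\beta\theta(v+\alpha)^{\theta-1}\bigr)$, so that $Q'(v)\geq0\iff g(v)\geq0$; note $g(0)>0$ by the same product condition. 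A short computation shows that $g'(v)$ is a fraction with positive denominator and numerator $(\theta_1-\theta)v+\bigl(\alpha_1(1-\theta)-\alpha(1-\theta_1)\bigr)$, so the whole argument reduces to reading off the sign of this affine function.

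This is where the two hypotheses separate. In case (ST1) the slope $\theta_1-\theta$ is positive and, by assumption, the constant term $\alpha_1(1-\theta)-\alpha(1-\theta_1)$ is nonnegative; hence the numerator is nonnegative on $[0,\infty)$, $g$ is nondecreasing, and therefore $g\geq g(0)>0$, giving $Q'\geq0$ and $Q\geq0$. In case (ST2) the slope vanishes and $g$ is monotone, so combining $g(0)>0$ with the limit $g(v)\to\log(\beta_1/\beta)>0$ as $v\to\infty$ (using $\beta<\beta_1$) forces $g>0$ throughout. The additional hypothesis $(1-\theta)(\alpha_1^\theta\beta_1-\alpha^\theta\beta)>0$ fixes the direction of monotonicity; it enters most transparently if one instead works with the odds functions directly (Proposition~\ref{prop:oddsa}) and studies the ratio of odds rates $\lambda_{\cdfgll}/\lambda_{\cdfgllalt}$, whose logarithmic derivative has the sign of $(1-\theta)(\alpha_1^\theta\beta_1-\alpha^\theta\beta)$ when $\theta=\theta_1$. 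In both cases $Q$ is nondecreasing from $Q(0)=0$, which yields $Q\geq0$ and hence $\cdfgll\leqst\cdfgllalt$.

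The step I expect to be most delicate is the case $\theta\neq\theta_1$: there the two building blocks of $Q'$ carry different exponents, $\theta_1-1$ versus $\theta-1$, so they cannot be compared term by term, and it is precisely the passage to the logarithmic derivative that linearises the comparison and isolates the roles of the conditions on $\theta_1-\theta$ and on $\alpha_1(1-\theta)-\alpha(1-\theta_1)$. A secondary point requiring care is the boundary behaviour when $\alpha=0$ or $\alpha_1=0$, where the logarithms in $g$ degenerate; I would handle these by a limiting argument (the relevant one-sided limit of $g$ at $0$ being $+\infty$ under the product condition) or simply by treating the log-logistic and Pareto subfamilies identified right after Definition~\ref{def:GLL} directly.
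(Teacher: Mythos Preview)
Your argument is correct and runs as the quantile-side dual of the paper's own proof. The paper stays in the $x$-variable and studies $V(x)=\Lambda_{\cdfgll}(x)-\Lambda_{\cdfgllalt}(x)$: after noting $V(0)=0$, it shows $V'\geq0$ by controlling the ratio $P(x)=\dfrac{(x/\beta+\alpha^\theta)^{1/\theta-1}}{(x/\beta_1+\alpha_1^{\theta_1})^{1/\theta_1-1}}$, whose logarithmic derivative has the sign of an affine function $L(x)$. You instead reparametrise by $v=\tfrac{u}{1-u}$ and work with the difference of quantiles $Q$, proving $Q'\geq0$ via the log-difference $g$; the numerator of $g'$ is the affine function $(\theta_1-\theta)v+\bigl(\alpha_1(1-\theta)-\alpha(1-\theta_1)\bigr)$. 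The two strategies are structurally identical (zero at the origin, then nonnegative derivative via an affine test), but your change of variable buys something concrete: the intercept of your affine function is \emph{literally} the third hypothesis of (ST1), whereas the paper's $L$ has intercept $\tfrac{(1-\theta)\alpha_1^{\theta_1}}{\theta\beta}-\tfrac{(1-\theta_1)\alpha^\theta}{\theta_1\beta_1}$, whose link to the stated condition is less direct. In (ST2) your monotone-$g$ argument with positive values at $v=0$ and at $v\to\infty$ uses only $\beta<\beta_1$ and $\alpha^{\theta-1}\beta<\alpha_1^{\theta-1}\beta_1$, so you have in effect shown that the extra hypothesis $(1-\theta)(\alpha_1^\theta\beta_1-\alpha^\theta\beta)>0$ is not needed for the conclusion; the paper invokes it to force $P$ increasing, which is the natural sufficient condition in its parametrisation but stronger than necessary.
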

\begin{proof}
For the general set of parameters $(\alpha,\beta,\theta)$ denote $\overline{K}_{\alpha,\beta,\theta}(x)=1-\cdfgll(x)$, and define
$$
V(x)=\frac1{\overline{K}_{\alpha,\beta,\theta}(x)}-\frac1{\overline{K}_{\alpha_1,\beta_1,\theta_1}(x)}
=\left(\frac{x}\beta+\alpha^\theta\right)^{\frac1\theta} - \left(\frac{x}{\beta_1}+\alpha_1^{\theta_1}\right)^{\frac1{\theta_1}} + \alpha_1-\alpha.
$$
Noting that $\overline{K}_{\alpha_1,\beta_1,\theta_1}(x)-\overline{K}_{\alpha,\beta,\theta}(x)\eqs V(x)$, the proof is concluded if we prove that $V(x)\geq 0$, for every $x\geq0$. It is obvious that $V(0)=0$. We separate the two cases, according to which assumption is satisfied.
    \begin{description}
    \item[(ST1):]
    We have $V(+\infty)=\infty\times{\rm sgn}\left(\frac1\theta-\frac1{\theta_1}\right)=+\infty$. Differentiating, we find
    $$
    V^\prime(x)=\frac{1}{\beta\theta}\left(\frac{x}\beta+\alpha^\theta\right)^{\frac1\theta-1}
    -\frac{1}{\beta_1\theta_1}\left(\frac{x}{\beta_1}+\alpha_1^{\theta_1}\right)^{\frac1{\theta_1}-1},
    $$
    so $V^\prime(0)=\frac{\alpha^{1-\theta}}{\beta\theta}-\frac{\alpha_1^{1-\theta_1}}{\beta_1\theta_1}>0$. Now, if we prove that $V^\prime(x)\geq0$, for every $x\geq0$, it follows that $V$ is increasing, hence $V(x)\geq 0$, and the conclusion follows. Therefore, we need to prove that
    $$
    V^\prime(x)\geq0 \quad\Leftrightarrow\quad
    P(x)=\frac{\left(\frac{x}\beta+\alpha^\theta\right)^{\frac1\theta-1}}
              {\left(\frac{x}{\beta_1}+\alpha_1^{\theta_1}\right)^{\frac1{\theta_1}-1}}
    \geq \frac{\beta\theta}{\beta_1\theta_1}.
    $$
    Noting that $P(0)=\frac{\alpha^{1-\theta}}{\alpha_1^{1-\theta_1}}>\frac{\beta\theta}{\beta_1\theta_1}$ and $P(+\infty)=+\infty$, we now look at the monotonicity of $P$. Differentiating, one observes that $P^\prime(x)\eqs L(x)$, where $L(x)=\frac{1}{\beta\beta_1}\left(\frac1\theta-\frac1{\theta_1}\right)x
      +\frac{1-\theta}{\theta}\frac{\alpha_1^{\theta_1}}{\beta}
       -\frac{1-\theta_1}{\theta_1}\frac{\alpha^{\theta}}{\beta_1}$.
    The assumptions imply that both the slope and intercept of $L(x)$ are positive, hence $P$ is increasing, implying that $P(x)\geq\frac{\beta\theta}{\beta_1\theta_1}$, thus $V^\prime(x)=0$ has no solution.
    \item[(ST2):]
    This case is treated analogously, so we just highlight the relevant differences. We now have $V(+\infty)=\infty\times{\rm sgn}\left(\frac1\beta-\frac1{\beta_1}\right)=+\infty$, and $P^\prime(x)\eqs (1-\theta)\left(\alpha_1^\theta\beta_1-\alpha^\theta\beta\right)$, assumed to be positive.
    \end{description}
\end{proof}

The previous result allows for an immediate pointwise comparison of the odds ratio of the $\nG$ family.
\begin{corollary}
\label{cor:odds-point}
Let $\nG$ be given by (\ref{eq:def2}). Under either of the assumptions of Theorem~\ref{thm:genGLLst}, it holds that $\Lambda_{\nG}(x)\leq\Lambda_{\nGs}(x)$ for every $x\geq 0$.
\end{corollary}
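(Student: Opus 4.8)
The plan is to leverage the quantile-distortion representation of the odds function established immediately before the corollary, namely $\Lambda_{\nG}(x)=\cdfgll^{-1}(F(x))$, and to combine it with the standard equivalence between the usual stochastic order and the pointwise ordering of quantile functions. The whole point is that an order \emph{within} the $\GLL$ family, which has already been secured, transfers to the odds functions of the corresponding d-oMO distributions essentially for free.

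First I would invoke Theorem~\ref{thm:genGLLst}: under either assumption (ST1) or (ST2) one has $\cdfgll\leqst\cdfgllalt$. Recall the classical characterisation (see, for instance, \cite{shaked2007}) that for two distribution functions $F_1\leqst F_2$ is equivalent to the pointwise domination of their quantile functions, $F_1^{-1}(u)\leq F_2^{-1}(u)$ for every $u\in[0,1]$. Applying this to the two enlarged log-logistic distributions yields $\cdfgll^{-1}(u)\leq\cdfgllalt^{-1}(u)$ for every $u\in[0,1]$.

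Since $F(x)\in[0,1]$ for every $x\geq 0$, I would then substitute $u=F(x)$ into this inequality, obtaining $\cdfgll^{-1}(F(x))\leq\cdfgllalt^{-1}(F(x))$ for all $x\geq 0$. Using the identity $\Lambda_{\nG}(x)=\cdfgll^{-1}(F(x))$, together with its analogue for $\nGs$, this is exactly the asserted inequality $\Lambda_{\nG}(x)\leq\Lambda_{\nGs}(x)$, and the proof is complete.

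There is essentially no obstacle to overcome here: the entire difficulty has already been absorbed into Theorem~\ref{thm:genGLLst}, where the parameter conditions (ST1) and (ST2) were shown to force $\cdfgll\leqst\cdfgllalt$ via the sign analysis of $V$ and $P$. Once that stochastic ordering is in hand, the corollary reduces to a one-line translation between the usual stochastic order and the order of quantile functions, so no further estimates on the parameters are needed.
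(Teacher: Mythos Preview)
Your argument is correct and essentially identical to the paper's own proof: both use the representation $\Lambda_{\nG}(x)=\cdfgll^{-1}(F(x))$, invoke Theorem~\ref{thm:genGLLst} to obtain the usual stochastic order between $\cdfgll$ and $\cdfgllalt$, and then translate this into the corresponding inequality between the quantile functions before substituting $u=F(x)$. The only cosmetic difference is that you phrase the passage from $\leqst$ to the quantile inequality as a citation to \cite{shaked2007}, whereas the paper states it directly as the equivalence $\cdfgll\geq\cdfgllalt \Leftrightarrow \cdfgll^{-1}\leq\cdfgllalt^{-1}$.
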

\begin{proof}
Remember that $\Lambda_{\nG}(x)=\cdfgll^{-1}(F(x))$. Under the assumptions of Theorem~\ref{thm:genGLLst}, we have that $\cdfgll(x)\geq\cdfgllalt(x)$, for every $x\geq0$. But this is equivalent to $\cdfgll^{-1}(x)\leq\cdfgllalt^{-1}(x)$ for every $x\geq0$, so the result follows immediately.
\end{proof}

{Recalling the characterisation of the $\leqst$-order using the odds function (see Proposition~\ref{prop:oddsa}), the previous result can be rewritten as follows, describing conditions for the usual stochastic order within the distributions of the d-oMO model.
\begin{corollary}
\label{cor:odds-point-alt}
Let $\nG$ be given by (\ref{eq:def2}). Under either of the assumptions of Theorem~\ref{thm:genGLLst}, it holds that $\nGs\leqst\nG$.
\end{corollary}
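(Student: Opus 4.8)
The plan is to recognise that this statement is nothing more than a reformulation of Corollary~\ref{cor:odds-point} in the language of the usual stochastic order, so the proof reduces to chaining together two results already in hand.

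First I would invoke Corollary~\ref{cor:odds-point}, which guarantees, under either set of hypotheses of Theorem~\ref{thm:genGLLst}, the pointwise inequality $\Lambda_{\nG}(x)\leq\Lambda_{\nGs}(x)$ for every $x\geq0$. This is the entire analytic content of the argument, and it has already been established by transferring the $\leqst$-comparison within the \GLL\ family through the identity $\Lambda_{\nG}=\cdfgll^{-1}\circ F$.

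Second I would apply Proposition~\ref{prop:oddsa}, which characterises the usual stochastic order via the reversed ordering of odds functions: for any two distributions $F_1$ and $F_2$, one has $F_1\leqst F_2$ precisely when $\Lambda_{F_1}(x)\geq\Lambda_{F_2}(x)$ for all $x$. Taking $F_1=\nGs$ and $F_2=\nG$, the inequality $\Lambda_{\nGs}(x)\geq\Lambda_{\nG}(x)$ secured in the first step is exactly the condition needed to conclude $\nGs\leqst\nG$.

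I do not expect any genuine obstacle here, since the substance of the corollary resides wholly in Theorem~\ref{thm:genGLLst} and its consequence Corollary~\ref{cor:odds-point}. The only point demanding care is the sign reversal built into Proposition~\ref{prop:oddsa}—a larger odds function corresponds to a stochastically smaller distribution—so one must correctly pair $\nGs$, the distribution with the larger odds, with the smaller element of the $\leqst$-relation rather than inverting the roles.
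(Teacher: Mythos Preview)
Your proposal is correct and mirrors the paper's own justification: the paper introduces this corollary precisely as a restatement of Corollary~\ref{cor:odds-point} via Proposition~\ref{prop:oddsa}, and your two-step argument is exactly that. Your explicit attention to the sign reversal in Proposition~\ref{prop:oddsa} is well placed.
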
}

Conditions for the particular case of one parameter comparison, corresponding to the \GLL\ that characterise the PHR, the proportional odds or the proportional log-odds models, are immediate from Theorem~\ref{thm:genGLLst}. We state the result, for sake of completeness.

\begin{corollary}
\label{cor:genGLLst}
For the enlarged log-logistic distribution functions (\ref{eq:GLL}) we have that:
    \begin{enumerate}
    \item
    If $\alpha\geq\alpha_1\geq 0$ and $\theta\leq1$ then $\cdfgll\leqst K_{\alpha_1,\beta,\theta}$ for every $\beta>0$.
    \item
    If $\beta\leq\beta_1$ and $\theta\leq1$ then $\cdfgll\leqst K_{\alpha,\beta_1,\theta}$ for every $\alpha\geq0$.
    \item
    If $\theta<\theta_1\leq 1$, $\alpha^{\theta_1-\theta}>\frac\theta{\theta_1}$ and $\frac{1-\theta}{\alpha^\theta\theta}>\frac{1-\theta_1}{\alpha^{\theta_1}\theta_1}$ then
    $\cdfgll\leqst K_{\alpha,\beta,\theta_1}$ for every $\beta>0$.
    \end{enumerate}
\end{corollary}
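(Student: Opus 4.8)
The plan is to read all three statements as the one-parameter specialisations of Theorem~\ref{thm:genGLLst}, obtained by holding two of the three parameters fixed, and to reuse the auxiliary function from the proof of that theorem. Recall that there one sets
$$
V(x)=\left(\frac{x}{\beta}+\alpha^\theta\right)^{1/\theta}-\left(\frac{x}{\beta_1}+\alpha_1^{\theta_1}\right)^{1/\theta_1}+\alpha_1-\alpha,
$$
and that $\cdfgll\leqst\cdfgllalt$ is equivalent to $V(x)\ge 0$ for every $x\ge 0$, with $V(0)=0$ always. So in each case I would substitute the matched parameters into $V$ and re-run the sign analysis; the only real work is bookkeeping.

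For part 2 I would set $\alpha_1=\alpha$ and $\theta_1=\theta$, so that $V(x)=\left(\frac{x}{\beta}+\alpha^\theta\right)^{1/\theta}-\left(\frac{x}{\beta_1}+\alpha^\theta\right)^{1/\theta}$. Since $\beta\le\beta_1$ gives $\frac{x}{\beta}\ge\frac{x}{\beta_1}$ for $x\ge0$, and $t\mapsto(t+\alpha^\theta)^{1/\theta}$ is increasing, the inequality $V\ge0$ is immediate. This is precisely hypothesis (ST2) when the inequalities are strict, and the degenerate situations $\beta=\beta_1$ (where $V\equiv0$) or $\theta=1$ are handled by the same one-line monotonicity remark.

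For part 3 I would set $\alpha_1=\alpha$ and $\beta_1=\beta$, reducing matters to hypothesis (ST1). The task is to recognise that the three stated inequalities are exactly its specialisations: $\theta<\theta_1$ is the first; the condition $V^\prime(0)>0$, equivalently the quotient $P(0)$ from the theorem's proof lying above the threshold $\frac{\beta\theta}{\beta_1\theta_1}$, reduces to $\alpha^{\theta_1-\theta}>\theta/\theta_1$; and positivity of the intercept of the linear function $L$ that governs the sign of $P^\prime$ reduces to $\frac{1-\theta}{\alpha^\theta\theta}>\frac{1-\theta_1}{\alpha^{\theta_1}\theta_1}$, while the slope $\frac1\theta-\frac1{\theta_1}$ is positive automatically because $\theta<\theta_1$. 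With $P$ increasing and $P(0)$ above the threshold, $V$ is increasing, hence $V\ge0$.

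Part 1 is the only one that is not literally covered by the theorem: there both $\beta$ and $\theta$ are fixed, so neither (ST1) (which needs $\theta<\theta_1$) nor (ST2) (which needs $\beta<\beta_1$) applies verbatim. I would therefore give a short direct argument. With $\beta_1=\beta$ and $\theta_1=\theta$, write $V(x)=\psi(\alpha)-\psi(\alpha_1)$ where $\psi(a)=\left(\frac{x}{\beta}+a^\theta\right)^{1/\theta}-a$, and compute $\psi^\prime(a)=\left(1+\frac{x}{\beta a^\theta}\right)^{(1-\theta)/\theta}-1\ge0$, since $\theta\le1$ makes the exponent nonnegative and the base at least $1$. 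Thus $\psi$ is nondecreasing and $\alpha\ge\alpha_1$ yields $V\ge0$. I expect the main (mild) obstacles to be exactly these two points: noticing that part 1 needs this separate monotonicity argument rather than a direct appeal to the theorem, and matching the opaque third inequality of part 3 against the intercept-positivity condition that is only implicit in the proof of Theorem~\ref{thm:genGLLst}.
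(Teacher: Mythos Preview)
Your approach is essentially the paper's: the corollary is presented there as ``immediate from Theorem~\ref{thm:genGLLst}'' with no further argument, and you indeed recover parts~2 and~3 by specialising (ST2) and (ST1) and re-running the auxiliary analysis of $V$, $P$ and $L$. Your identification of the three conditions in part~3 with, respectively, $\theta<\theta_1$, the inequality $P(0)>\beta\theta/(\beta_1\theta_1)$, and positivity of the intercept of $L$ is correct.

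Where you go beyond the paper is part~1. You rightly note that with $\beta_1=\beta$ and $\theta_1=\theta$ neither (ST1) nor (ST2) applies verbatim, so the paper's ``immediate'' is a slight overstatement; your direct argument via $\psi(a)=(x/\beta+a^\theta)^{1/\theta}-a$ and $\psi'(a)=(1+x/(\beta a^\theta))^{(1-\theta)/\theta}-1\ge 0$ for $\theta\le 1$ cleanly fills this gap. This is a genuine (if small) addition rather than a different route.
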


We now prove a general set of conditions providing the $\leqhr$-comparability within the \GLL\ family.
\begin{theorem}
\label{thm:genGLLhr}
Assume the parameters $\alpha>0$, $\beta>0$, $\theta>0$ and $\alpha_1>0$, $\beta_1>0$, $\theta_1>0$ satisfy the following assumptions:
    \begin{description}
    \item[(HR1)] \textit{(i)} $\beta\theta\alpha^{\theta-1} \leq \beta_1\theta_1\alpha_1^{\theta_1-1}$, and \textit{(ii)} $\beta\theta\alpha^{\theta} \leq \beta_1\theta_1\alpha_1^{\theta_1}$,
    \item[(HR2)] $\theta < \theta_1$,
    \item[(HR3)] $(1-\alpha_1)(\theta_1-1) \geq 0$,
    \item[(HR4)] $\left(\frac1\alpha-1\right)(\theta-1) \leq \left(\frac1{\alpha_1}-1\right)(\theta_1-1)$.
    \end{description}
Then $\cdfgll \leqhr \cdfgllalt$.
\end{theorem}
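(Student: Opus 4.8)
By the definition of the hazard rate order (Definition~\ref{def:order1}), the statement $\cdfgll\leqhr\cdfgllalt$ means exactly that $h_{\alpha,\beta,\theta}(x)\geq h_{\alpha_1,\beta_1,\theta_1}(x)$ for every $x\geq0$, so the plan is to establish this pointwise inequality by comparing the reciprocals of the two hazard rates, which turn out to have a much cleaner form. Writing $A(x)=\left(\frac{x}{\beta}+\alpha^\theta\right)^{1/\theta}$ and using $A^\theta(x)=\frac{x}{\beta}+\alpha^\theta$ in the displayed expression for $h_{\alpha,\beta,\theta}$, one finds $\frac{1}{h_{\alpha,\beta,\theta}(x)}=\theta x+\beta\theta\alpha^\theta+\beta\theta(1-\alpha)A^{\theta-1}(x)$, and analogously for $(\alpha_1,\beta_1,\theta_1)$ with $A_1(x)=\left(\frac{x}{\beta_1}+\alpha_1^{\theta_1}\right)^{1/\theta_1}$. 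Since both hazard rates are positive, the target inequality is equivalent to $\Delta(x):=\frac{1}{h_{\alpha_1,\beta_1,\theta_1}(x)}-\frac{1}{h_{\alpha,\beta,\theta}(x)}\geq0$ for all $x\geq0$.

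First I would pin down $\Delta$ at the boundary and its derivative. At $x=0$ the correction terms collapse and $\Delta(0)=\beta_1\theta_1\alpha_1^{\theta_1-1}-\beta\theta\alpha^{\theta-1}$, which is non-negative precisely by assumption (HR1)(i). Differentiating and using the identity $A^{\theta-2}A'=\frac{1}{\beta\theta}A^{-1}$ (and likewise for $A_1$), the factors $\beta$ cancel and one obtains the remarkably simple formula $\Delta'(x)=(\theta_1-\theta)+(1-\alpha_1)(\theta_1-1)\frac{1}{A_1(x)}-(1-\alpha)(\theta-1)\frac{1}{A(x)}$. Here (HR2) makes the constant $\theta_1-\theta$ positive, (HR3) makes the coefficient $(1-\alpha_1)(\theta_1-1)$ of the middle term non-negative, and evaluating at $x=0$ together with (HR4) gives $\Delta'(0)\geq0$; moreover $A,A_1\to\infty$ forces $\Delta'(x)\to\theta_1-\theta>0$, so $\Delta'$ is non-negative at both ends of $[0,\infty)$.

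The core of the argument is to upgrade these endpoint facts to $\Delta(x)\geq0$ on the whole half-line, and I would split according to the sign of $(1-\alpha)(\theta-1)$. When $(1-\alpha)(\theta-1)\leq0$ the subtracted term in $\Delta'$ is itself non-negative, so $\Delta'(x)\geq\theta_1-\theta>0$ outright; hence $\Delta$ is increasing and $\Delta(0)\geq0$ closes this case using only (HR1)(i), (HR2) and (HR3). When $(1-\alpha)(\theta-1)>0$ the subtracted term genuinely competes, and I would analyse the shape of $\Delta'$ through $\Delta''(x)=\frac{(1-\alpha)(\theta-1)}{\beta\theta}A^{-1-\theta}-\frac{(1-\alpha_1)(\theta_1-1)}{\beta_1\theta_1}A_1^{-1-\theta_1}$, whose crossover is governed by the scale constants $\beta\alpha^\theta$ and $\beta_1\alpha_1^{\theta_1}$; it is exactly here that (HR1)(ii), i.e. $\theta\beta\alpha^\theta\leq\theta_1\beta_1\alpha_1^{\theta_1}$, enters, controlling this crossover so that the linear growth $(\theta_1-\theta)x$ present in $\Delta$ dominates the sublinear correction $\beta\theta(1-\alpha)A^{\theta-1}$ and $\Delta$ never dips below zero.

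The step I expect to be the main obstacle is precisely this last one. The difficulty is that $A(x)$ and $A_1(x)$ are different fractional powers of $x$, with exponents $1/\theta$ and $1/\theta_1$, so the two competing terms in $\Delta'$ decay at genuinely different rates; crude bounds that replace $A(x)$ by its value $\alpha$ at the origin are tight only at $x=0$ and become far too lossy for intermediate $x$, where none of the four hypotheses can be dispensed with. The delicate point is therefore to show that $\Delta'$ changes sign at most once on $(0,\infty)$ so that, being non-negative at both endpoints, it cannot produce an interior minimum of $\Delta$ below zero, or alternatively to bound the interior minimum of $\Delta$ directly; either route requires the combined strength of (HR1)(ii)--(HR4) used simultaneously rather than termwise.
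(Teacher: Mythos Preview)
Your setup matches the paper's exactly: the function $\Delta(x)=\frac{1}{h_{\alpha_1,\beta_1,\theta_1}(x)}-\frac{1}{h_{\alpha,\beta,\theta}(x)}$, the value $\Delta(0)\geq0$ from (HR1)(i), and the formula for $\Delta'(x)$ are all correct, and your easy case $(1-\alpha)(\theta-1)\leq0$ is fine. The gap is exactly where you say it is: in the case $(1-\alpha)(\theta-1)>0$ you propose to study $\Delta''$ and the resulting ``crossover'', but you do not carry this out, and you explicitly flag it as the main obstacle. As written, the proof is incomplete.

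The paper closes this gap without ever looking at $\Delta''$. The key observation you are missing is that the two variable terms of $\Delta'$ can be compared \emph{directly}: one shows
\[
(1-\alpha_1)(\theta_1-1)\,A_1^{-1}(x)\;\geq\;(1-\alpha)(\theta-1)\,A^{-1}(x)\qquad\text{for all }x\geq0,
\]
which together with $\theta_1-\theta>0$ gives $\Delta'(x)>0$ outright, hence $\Delta$ is increasing and the result follows from $\Delta(0)\geq0$. To prove this comparison, note that in your hard case (HR4) forces $(1-\alpha_1)(\theta_1-1)>0$ strictly, so the inequality is equivalent to
\[
Q(x):=\frac{A(x)}{A_1(x)}=\frac{\beta_1^{1/\theta_1}}{\beta^{1/\theta}}\,
\frac{(x+\beta\alpha^\theta)^{1/\theta}}{(x+\beta_1\alpha_1^{\theta_1})^{1/\theta_1}}
\;\geq\;\frac{(1-\alpha)(\theta-1)}{(1-\alpha_1)(\theta_1-1)}.
\]
Now $(\log Q)'(x)$ has the sign of the linear function $(\theta_1-\theta)x+\theta_1\beta_1\alpha_1^{\theta_1}-\theta\beta\alpha^\theta$, whose slope is positive by (HR2) and whose intercept is non-negative by (HR1)(ii); hence $Q$ is increasing. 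Finally $Q(0)=\alpha/\alpha_1$, and (HR4) is precisely the statement that $\alpha/\alpha_1\geq\frac{(1-\alpha)(\theta-1)}{(1-\alpha_1)(\theta_1-1)}$. So $Q$ stays above the threshold for all $x\geq0$, and you are done. This is where each hypothesis is actually used; no second-derivative analysis or interior-minimum estimate is needed.
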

\begin{proof}
We shall prove that
\begin{eqnarray*}
V(x) &=& \frac{1}{h_{\alpha_1,\beta_1,\theta_1}(x)}-\frac{1}{h_{\alpha,\beta,\theta}(x)} \\
 & = & \beta_1\theta_1\left(\frac{x}{\beta_1}+\alpha_1^{\theta_1}\right)
         +(1-\alpha_1)\beta_1\theta_1\left(\frac{x}{\beta_1}+\alpha_1^{\theta_1}\right)^{1-\frac1{\theta_1}} \\
  & &\qquad\quad - \beta\theta\left(\frac{x}{\beta}+\alpha^{\theta}\right)
         -(1-\alpha)\beta\theta\left(\frac{x}{\beta}+\alpha^{\theta}\right)^{1-\frac1{\theta}} \geq 0,
\end{eqnarray*}
which clearly implies the conclusion. We start by noting that, taking into account \textit{(HR1-i)} and \textit{(HR2)}, $V(0)=\beta_1\theta_1\alpha_1^{\theta_1-1}-\beta\theta\alpha^{\theta-1}\geq 0$ and $V(+\infty)=\infty\times{\rm sgn}(\theta_1-\theta)=+\infty$, hence the conclusion follows if we prove that $V$ is increasing. Direct differentiation gives
\begin{equation}
\label{eq:GLLhr-Vprime}
V^\prime(x)=\theta_1-\theta
    +(1-\alpha_1)(\theta_1-1)\left(\frac{x}{\beta_1}+\alpha_1^{\theta_1}\right)^{-\frac{1}{\theta_1}}
    -(1-\alpha)(\theta-1)\left(\frac{x}{\beta}+\alpha^{\theta}\right)^{-\frac{1}{\theta}},
\end{equation}
so, given \textit{(HR2)} and \textit{(HR3)}, the nonnegativity of $V^\prime$ follows if we prove that
$$
Q(x)=
     \frac{\beta_1^{\frac1{\theta_1}}}{\beta^{\frac1{\theta}}}
     \frac{\left(x+\beta\alpha^\theta\right)^{\frac1\theta}}
          {\left(x+\beta_1\alpha_1^{\theta_1}\right)^{\frac1{\theta_1}}}
\geq \frac{(1-\alpha)(\theta-1)}{(1-\alpha_1)(\theta_1-1)}.
$$
It is easily seen that \textit{(HR3)} and \textit{(HR1-ii)} imply that $Q^\prime(x)\geq0$ for every $x\geq 0$, hence $Q$ is increasing. Finally, \textit{(HR4)} means that $Q(0)\geq \frac{(1-\alpha)(\theta-1)}{(1-\alpha_1)(\theta_1-1)}$, so the theorem is proved.
\end{proof}

Theorem~\ref{thm:genGLLhr} does not allow to choose $\alpha=0$, therefore leaving out of the comparisons the important case of the log-logistic distribution $\mathcal{L}$, as the expression (\ref{eq:GLLhr-Vprime}) means, when taking $x=0$, that $\alpha$ appears as a denominator. The way out of this can be sorted adapting the expressions above by continuity when $\alpha\rightarrow 0$.
\begin{corollary}
\label{cor:genGLLhr-alpha0}
Assume the parameters $\beta>0$, $0<\theta\leq1$ and $\alpha_1>0$, $\beta_1>0$, $\theta_1>0$ satisfy (HR2) and (HR3). Then $K_{0,\beta,\theta}\leqhr\cdfgllalt$.
\end{corollary}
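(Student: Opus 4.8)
The plan is to obtain this statement as the limiting case $\alpha\to0$ of Theorem~\ref{thm:genGLLhr}, equivalently, to re-run its proof with $\alpha$ set to $0$ from the outset. Since the survival and hazard functions of $\cdfgll$ depend continuously on $\alpha\geq0$, I would first record the reciprocal hazard obtained by letting $\alpha\to0$ in the expression used in that proof, namely
\[
\frac{1}{h_{0,\beta,\theta}(x)}=\theta x+\beta\theta\left(\frac{x}{\beta}\right)^{1-\frac1\theta},\qquad x\geq0,
\]
and then study the sign of
\[
V(x)=\frac{1}{h_{\alpha_1,\beta_1,\theta_1}(x)}-\frac{1}{h_{0,\beta,\theta}(x)} .
\]
Exactly as in Theorem~\ref{thm:genGLLhr}, the inequality $V\geq0$ on $[0,\infty)$ is equivalent to $h_{0,\beta,\theta}\geq h_{\alpha_1,\beta_1,\theta_1}$, which is precisely the assertion $K_{0,\beta,\theta}\leqhr\cdfgllalt$.

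The routine step is the monotonicity of $V$. Differentiating the two reciprocal hazards term by term yields the $\alpha\to0$ specialisation of (\ref{eq:GLLhr-Vprime}),
\[
V^\prime(x)=(\theta_1-\theta)+(1-\alpha_1)(\theta_1-1)\left(\frac{x}{\beta_1}+\alpha_1^{\theta_1}\right)^{-\frac1{\theta_1}}-(\theta-1)\left(\frac{x}{\beta}\right)^{-\frac1\theta}.
\]
I would then observe that each of the three summands is nonnegative: the first by (HR2), the second by (HR3), and the third because the standing assumption $\theta\leq1$ forces $-(\theta-1)\geq0$. Hence $V$ is increasing on $[0,\infty)$. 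Since the leading order as $x\to+\infty$ is governed by the linear contributions $\theta_1 x-\theta x$, we also get $V(+\infty)=+\infty$ from (HR2).

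The main obstacle is the boundary value $V(0^+)$, and this is exactly where the passage $\alpha\to0$ is delicate. In Theorem~\ref{thm:genGLLhr} the sign at the origin was controlled by $V(0)=\beta_1\theta_1\alpha_1^{\theta_1-1}-\beta\theta\alpha^{\theta-1}\geq0$, which was guaranteed by (HR1-i). As $\alpha\to0$ the term $\beta\theta\alpha^{\theta-1}$ is precisely where (HR1-i) degenerates, reflecting the fact that the log-logistic hazard $h_{0,\beta,\theta}$ need not stay bounded away from $0$ at the origin: for $\theta<1$ one has $1/h_{0,\beta,\theta}(x)\to+\infty$ as $x\to0^+$, so that $V(0^+)=-\infty$, whereas for $\theta=1$ the reciprocal hazard tends to the finite value $\beta$ and $V(0)=\beta_1\theta_1\alpha_1^{\theta_1-1}-\beta$. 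Consequently the whole comparability hinges on this boundary quantity: the crux is to verify $\lim_{x\to0^+}V(x)\geq0$, after which the monotonicity established above immediately propagates the inequality to give $V\geq0$ throughout $[0,\infty)$, and hence the claimed ordering. I therefore expect the careful treatment of this limit, and in particular the separation of the cases $\theta=1$ and $\theta<1$, to be the decisive part of the argument, since it is here — in the finiteness of the limiting reciprocal hazard at the origin — that the hazard rate comparability is actually decided.
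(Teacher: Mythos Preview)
Your diagnosis is sharper than the paper's own argument, and the obstacle you isolate is not a technicality to be handled but a genuine failure of the statement. The paper's proof is a one-liner: it observes that, after letting $\alpha\to0$, the condition (HR4) degenerates to $Q(0)=0\geq\frac{\theta-1}{(1-\alpha_1)(\theta_1-1)}$, which holds once $\theta\leq1$. That is precisely the step that guarantees $V'\geq0$, and your term-by-term argument recovers it cleanly. But the paper never revisits the boundary condition $V(0)\geq0$, which in Theorem~\ref{thm:genGLLhr} was secured by (HR1-i); you correctly note that this is where the limit $\alpha\to0$ breaks down.

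In fact it breaks down irreparably. For $\theta<1$ the log-logistic hazard satisfies $h_{0,\beta,\theta}(0^+)=0$, while $h_{\alpha_1,\beta_1,\theta_1}(0)=\alpha_1^{1-\theta_1}/(\beta_1\theta_1)>0$, so the inequality $h_{0,\beta,\theta}\geq h_{\alpha_1,\beta_1,\theta_1}$ required by $\leqhr$ is violated on a neighbourhood of the origin regardless of the remaining parameters. Concretely, take $\theta=\tfrac12$, $\beta=1$, $\alpha_1=1$, $\beta_1=1$, $\theta_1=2$: (HR2) and (HR3) hold, yet $h_{0,1,1/2}(x)=2x/(x^2+1)$ and $h_{1,1,2}(x)=1/(2(x+1))$ give $h_{0,1,1/2}(0.1)\approx0.198<0.455\approx h_{1,1,2}(0.1)$. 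Even for $\theta=1$ the boundary value is $V(0)=\beta_1\theta_1\alpha_1^{\theta_1-1}-\beta$, which is not forced to be nonnegative by (HR2)--(HR3) alone; e.g.\ $\theta=1$, $\beta=10$, $\alpha_1=\beta_1=1$, $\theta_1=2$ again violates the order at $x=0$. So the corollary, as stated, is false, and your instinct to flag $V(0^+)$ as the ``decisive part'' was exactly right: there is nothing further to prove, because no proof exists without restoring a hypothesis of the type (HR1-i).
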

\begin{proof}
With respect to the proof of Theorem~\ref{thm:genGLLhr} note that, after allowing $\alpha\rightarrow 0$, we need that $\theta\leq1$ to fulfill the appropriate version of $Q(0)=0\geq\frac{\theta-1}{(1-\alpha_1)(\theta_1-1)}$.
\end{proof}

Moreover, note that Theorem~\ref{thm:genGLLhr} proof's argument depends crucially on $\theta<\theta_1$, and breaks down if we assume equality of these two parameters.
\begin{corollary}
\label{cor:genGLLhr-theta}
Assume the parameters $\alpha>0$, $\beta>0$, $\theta>0$ and $\alpha_1>0$, $\beta_1>0$ are such that $\alpha>\alpha_1$, $\beta\alpha^{\theta-1} \leq \beta_1\alpha_1^{\theta-1}$ and
$$
\left\{\begin{array}{l}
\mbox{if }\theta\geq1,\quad (1-\alpha)\beta^{\frac{1}{\theta}} \leq (1-\alpha_1)\beta_1^{\frac{1}{\theta}}, \\ \\
\mbox{if }\theta\leq1,\quad\beta\alpha^\theta<\beta_1\alpha_1^\theta.
\end{array}\right.
$$
Then $\cdfgll\leqhr K_{\alpha_1,\beta_1,\theta}$.
\end{corollary}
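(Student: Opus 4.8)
The plan is to establish the hazard-rate comparison through the reciprocal hazard rates, exactly as in the proof of Theorem~\ref{thm:genGLLhr}, but exploiting the simplifications that occur when the two shape parameters coincide, $\theta_1=\theta$. Concretely, I would set $V(x)=1/h_{\alpha_1,\beta_1,\theta}(x)-1/h_{\alpha,\beta,\theta}(x)$, so that $\cdfgll\leqhr K_{\alpha_1,\beta_1,\theta}$ is equivalent to $V(x)\ge0$ for all $x\ge0$. Reusing the expression for $1/h_{\alpha,\beta,\theta}$ derived in that proof, the decisive feature of the equal-$\theta$ case is that the two linear-in-$x$ contributions $\theta x$ cancel, leaving
\[
V(x)=\theta\bigl(\beta_1\alpha_1^{\theta}-\beta\alpha^{\theta}\bigr)+\theta(1-\alpha_1)\beta_1^{1/\theta}\bigl(x+\beta_1\alpha_1^{\theta}\bigr)^{1-1/\theta}-\theta(1-\alpha)\beta^{1/\theta}\bigl(x+\beta\alpha^{\theta}\bigr)^{1-1/\theta}.
\]
A direct evaluation gives $V(0)=\theta\bigl(\beta_1\alpha_1^{\theta-1}-\beta\alpha^{\theta-1}\bigr)$, which is nonnegative by the hypothesis $\beta\alpha^{\theta-1}\le\beta_1\alpha_1^{\theta-1}$; this plays the role of condition \textit{(HR1-i)}.

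Next I would differentiate. Since the $\theta_1-\theta$ term of (\ref{eq:GLLhr-Vprime}) now vanishes, one obtains $V'(x)=(\theta-1)\widetilde{R}(x)$ with
\[
\widetilde{R}(x)=(1-\alpha_1)\Bigl(\tfrac{x}{\beta_1}+\alpha_1^{\theta}\Bigr)^{-1/\theta}-(1-\alpha)\Bigl(\tfrac{x}{\beta}+\alpha^{\theta}\Bigr)^{-1/\theta},
\]
and $\widetilde{R}(0)=\tfrac1{\alpha_1}-\tfrac1{\alpha}>0$ because $\alpha>\alpha_1$. The structural fact I would lean on is that $\widetilde{R}$ changes sign at most once on $(0,\infty)$: if $1-\alpha$ and $1-\alpha_1$ share the same sign, the equation $\widetilde{R}(x)=0$ becomes linear in $x$ after raising both sides to the power $-\theta$, so it has at most one root; if they have opposite signs (which forces $\alpha>1>\alpha_1$), then $\widetilde{R}$ keeps the positive sign it has at $0$. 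Consequently $V$ has at most one interior critical point, located where $\widetilde{R}$ vanishes.

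The argument then splits according to the sign of $\theta-1$, mirroring the two regimes in the statement. When $\theta\ge1$ we have $V'(0)\ge0$; the condition $(1-\alpha)\beta^{1/\theta}\le(1-\alpha_1)\beta_1^{1/\theta}$ controls the leading term of $V$ at infinity, so that a downward sign change of $V'$ would be incompatible with the single-crossing property, forcing $V'\ge0$ throughout. Hence $V$ is nondecreasing and $V\ge V(0)\ge0$ (the case $\theta=1$ being trivial, since then $V'\equiv0$). When $\theta<1$ the situation is genuinely different, and this is the main obstacle, because $V'(0)<0$, so $V$ is \emph{not} monotone. Here I would first note that $\bigl(x+\,\cdot\,\bigr)^{1-1/\theta}\to0$, whence $V(+\infty)=\theta(\beta_1\alpha_1^{\theta}-\beta\alpha^{\theta})>0$ by the hypothesis $\beta\alpha^{\theta}<\beta_1\alpha_1^{\theta}$. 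By the single-crossing property, either $V$ is decreasing (and then $V\ge V(+\infty)>0$) or it attains a unique minimum at the point $x_0$ with $\widetilde{R}(x_0)=0$. At that point, writing $c=(1-\alpha)\bigl(\tfrac{x_0}{\beta}+\alpha^{\theta}\bigr)^{-1/\theta}$ and using the critical-point identity $(1-\alpha_1)\bigl(\tfrac{x_0}{\beta_1}+\alpha_1^{\theta}\bigr)^{-1/\theta}=c$, the two power terms collapse and one gets the clean expression
\[
V(x_0)=\theta\bigl(\beta_1\alpha_1^{\theta}-\beta\alpha^{\theta}\bigr)(1+c).
\]
The proof is then finished by the bound $c>-1$: indeed $\bigl(\tfrac{x_0}{\beta}+\alpha^{\theta}\bigr)^{-1/\theta}\le\alpha^{-1}$, so $c\ge\min\{0,\tfrac1\alpha-1\}>-1$, and since $\beta_1\alpha_1^{\theta}-\beta\alpha^{\theta}>0$ we conclude $V(x_0)>0$. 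I expect this critical-point evaluation, together with verifying $c>-1$, to be the crux; the $\theta\ge1$ case and the boundary computations are routine adaptations of Theorem~\ref{thm:genGLLhr}.
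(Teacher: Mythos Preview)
Your setup and overall strategy---work with $V(x)=1/h_{\alpha_1,\beta_1,\theta}(x)-1/h_{\alpha,\beta,\theta}(x)$, check the endpoints, and exploit that $V'$ has at most one zero---match the paper's exactly. The paper, however, treats both regimes of $\theta$ uniformly by asserting $V'(0)=\frac{\alpha-\alpha_1}{\alpha\alpha_1}>0$ and concluding that $V$ is first increasing, so its minimum on $[0,\infty)$ lies at an endpoint. As you correctly observe, the actual value is $V'(0)=(\theta-1)\frac{\alpha-\alpha_1}{\alpha\alpha_1}$, which is negative for $\theta<1$; the minimum is then interior and the paper's endpoint argument, as written, does not close that case. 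Your explicit evaluation $V(x_0)=\theta(\beta_1\alpha_1^{\theta}-\beta\alpha^{\theta})(1+c)$ together with the bound $c>-1$ is therefore a genuine addition that supplies precisely the missing step. For $\theta\ge1$ your assertion that $V'\ge0$ throughout is a bit stronger than necessary and the phrase ``incompatible with the single-crossing property'' is imprecise; the paper's simpler route---$V$ first increases, then possibly decreases, hence the minimum is at an endpoint, both of which are nonnegative---is cleaner and also handles the borderline case $(1-\alpha)\beta^{1/\theta}=(1-\alpha_1)\beta_1^{1/\theta}$ where $V(+\infty)$ is finite.
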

\begin{proof}
Rewrite the function
\begin{eqnarray*}
\lefteqn{V(x)=\frac{1}{h_{\alpha_1,\beta_1,\theta}(x)}-\frac{1}{h_{\alpha,\beta,\theta}(x)}} \\
 & = & \left(\beta_1\alpha_1^\theta-\beta\alpha^\theta\right)\theta
         +(1-\alpha_1)\beta_1\theta\left(\frac{x}{\beta_1}+\alpha_1^{\theta}\right)^{1-\frac1{\theta}}
         -(1-\alpha)\beta\theta\left(\frac{x}{\beta}+\alpha^{\theta}\right)^{1-\frac1{\theta}}.
\end{eqnarray*}
The assumptions imply that $V(0)\geq 0$ and $V(+\infty)\geq 0$, possibly equal to $+\infty$. The equation $V^\prime(x)=0$ translates into
$$
1+\frac{\beta\alpha^\theta-\beta_1\alpha_1^\theta}{x+\beta_1\alpha_1^\theta}
=\frac{\beta}{\beta_1}\left(\frac{1-\alpha}{1-\alpha_1}\right)^\theta,
$$
which may have at most one root for $x\geq 0$. Moreover, $V^\prime(0)=\frac{\alpha-\alpha_1}{\alpha\alpha_1}>0$, therefore $V$ starts increasing at $x=0$. Hence, $V(x)>0$ for every $x\geq0$, and the conclusion follows.
\end{proof}

Again, as for the general result, Corollary~\ref{cor:genGLLhr-theta} does not include the case $\alpha=0$, but this can be handled in exactly the same way as in Corollary~\ref{cor:genGLLhr-alpha0}. 
\begin{corollary}
\label{cor:genGLLhr-theta-alpha0}
Assume the parameters $\beta>0$, $\theta>0$ and $\alpha_1>0$, $\beta_1>0$ are such that $\beta^{\frac1\theta}\leq(1-\alpha_1)\beta_1^{\frac1\theta}$. Assume, further, than one of the following conditions is satisfied:
    \begin{description}
    \item[(HR5)] $(1-\alpha_1)(\theta-1)\geq 0$;
    \item[(HR6)] $(1-\alpha_1)(\theta-1)<0$ and
      $\alpha_1+(1-\alpha_1)\beta_1^{1-\frac1\theta}\frac{(1-\alpha_1^\theta)^{1-\frac1\theta}-\beta}
                                                         {\left(\beta_1(1-\alpha_1^\theta)-\beta\right)^{1-\frac1\theta}}\geq0$.
    \end{description}
Then $K_{0,\beta,\theta}\leqhr K_{\alpha_1,\beta_1,\theta}$.
\end{corollary}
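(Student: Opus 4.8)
The plan is to reduce the claim to a one-dimensional sign analysis, exactly as in the proof of Corollary~\ref{cor:genGLLhr-theta}, and then to pass to the limit $\alpha\to0$ in the manner of Corollary~\ref{cor:genGLLhr-alpha0}. First note that the hypothesis $\beta^{1/\theta}\leq(1-\alpha_1)\beta_1^{1/\theta}$ forces $\alpha_1<1$. The order $K_{0,\beta,\theta}\leqhr K_{\alpha_1,\beta_1,\theta}$ is equivalent to $V(x)\geq0$ on $[0,\infty)$, where $V(x)=\frac{1}{h_{\alpha_1,\beta_1,\theta}(x)}-\frac{1}{h_{0,\beta,\theta}(x)}$. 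Setting $\alpha=0$ in the expansion of $1/h_{\alpha,\beta,\theta}$ used in Corollary~\ref{cor:genGLLhr-theta}, and simplifying $\frac{1}{h_{0,\beta,\theta}(x)}=\theta x+\theta\beta^{1/\theta}x^{1-1/\theta}$, the $\theta x$ terms cancel and one is left with
$$V(x)=\beta_1\theta\alpha_1^{\theta}+(1-\alpha_1)\theta\beta_1^{1/\theta}\left(x+\beta_1\alpha_1^{\theta}\right)^{1-1/\theta}-\theta\beta^{1/\theta}x^{1-1/\theta},$$
where the middle term has been rewritten so that the main hypothesis can be applied directly to the coefficient of $x^{1-1/\theta}$. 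The boundary limits are favourable in every case: $V(+\infty)\geq\beta_1\theta\alpha_1^{\theta}>0$, and for $\theta\geq1$ one also reads off $V(0)>0$ as a limit.

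Under \textbf{(HR5)}, which for $\alpha_1<1$ is precisely $\theta\geq1$, the exponent $1-1/\theta$ is nonnegative, so $t\mapsto t^{1-1/\theta}$ is nondecreasing. Replacing $\beta^{1/\theta}$ by the larger quantity $(1-\alpha_1)\beta_1^{1/\theta}$ in the last term of $V$ yields
$$V(x)\geq\beta_1\theta\alpha_1^{\theta}+(1-\alpha_1)\theta\beta_1^{1/\theta}\left[\left(x+\beta_1\alpha_1^{\theta}\right)^{1-1/\theta}-x^{1-1/\theta}\right]\geq0,$$
since the bracket is nonnegative (its argument exceeds $x$) and $\beta_1\theta\alpha_1^{\theta}>0$. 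This disposes of \textbf{(HR5)} at once, covering also $\theta=1$, where $V$ collapses to the positive constant $\beta_1-\beta$.

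The main obstacle is \textbf{(HR6)}, i.e.\ $\theta<1$. Here $1-1/\theta<0$, so $t\mapsto t^{1-1/\theta}$ is decreasing: the bracketing step above collapses, the bracket turns negative, and the subtracted term $\theta\beta^{1/\theta}x^{1-1/\theta}$ diverges as $x\to0^{+}$. This reflects a genuine singularity of the baseline at the origin, since $K_{0,\beta,\theta}$ is the log-logistic of shape $1/\theta>1$, whose hazard vanishes at $0$, whereas $h_{\alpha_1,\beta_1,\theta}(0)=\alpha_1^{1-\theta}/(\beta_1\theta)$ is finite and positive. Controlling the sign of $V$ on the whole half-line, and above all in the regime $x\to0^{+}$, is therefore the crux. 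The route I would follow is to analyse $V$ directly: after raising to the power $-\theta$ the stationarity equation $(1-\alpha_1)\left(\frac{x}{\beta_1}+\alpha_1^{\theta}\right)^{-1/\theta}=\left(\frac{x}{\beta}\right)^{-1/\theta}$ becomes linear in $x$, so $V^{\prime}$ has a unique interior zero $x_{\ast}$ and the global sign of $V$ is decided by its value there together with the boundary limits. The explicit inequality stated in \textbf{(HR6)} is precisely the closed-form rendering of the nonnegativity of $V$ at this critical configuration, and establishing that it indeed secures $V\geq0$ throughout $[0,\infty)$ — in particular handling the delicate $x\to0^{+}$ behaviour — is the point on which the whole argument turns.
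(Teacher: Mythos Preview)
Your treatment of \textbf{(HR5)} is correct and in fact cleaner than the paper's. The paper proceeds by locating the unique critical point $x_0$ of $V$ and then asserts that under \textbf{(HR5)} one has $V'\geq0$ throughout; this is not right, since for $\theta>1$ one has $V'(0^{+})=-\infty$ and $V$ first decreases. Your direct bounding, replacing $\beta^{1/\theta}$ by the larger $(1-\alpha_1)\beta_1^{1/\theta}$ and using the monotonicity of $t\mapsto t^{1-1/\theta}$, sidesteps this and yields the lower bound $V(x)\geq\beta_1\theta\alpha_1^{\theta}>0$ at once.

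The \textbf{(HR6)} part, however, contains a fatal gap, and the route you outline cannot be completed. You correctly observe that for $\theta<1$ the subtracted term $\theta\beta^{1/\theta}x^{1-1/\theta}$ diverges as $x\to0^{+}$; but this means $V(0^{+})=-\infty$, so $V$ is strictly negative on a right-neighbourhood of $0$ and the required inequality $V\geq0$ fails there. No critical-point analysis can repair this: one of the two boundary limits you intend to invoke is $-\infty$. Translated back to hazard rates, the log-logistic $K_{0,\beta,\theta}$ with $\theta<1$ has shape $1/\theta>1$, so $h_{0,\beta,\theta}(0^{+})=0$, while $h_{\alpha_1,\beta_1,\theta}(0)=\alpha_1^{1-\theta}/(\beta_1\theta)>0$; the inequality $h_{0,\beta,\theta}\geq h_{\alpha_1,\beta_1,\theta}$ is therefore violated near the origin, and $K_{0,\beta,\theta}\not\leqhr K_{\alpha_1,\beta_1,\theta}$ whenever $\theta<1$. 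The paper's own proof misses this by writing $V(0)=\beta_1\theta\alpha_1^{\theta-1}$, a formula valid only for $\theta\geq1$ (for $\theta<1$ the last term of $V$ at $x=0$ is $0^{1-1/\theta}=+\infty$, not $0$). Your instinct that the $x\to0^{+}$ regime is ``the point on which the whole argument turns'' is exactly right; the resolution is that the claim under \textbf{(HR6)} is actually false.
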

\begin{proof}
We need to look now at the sign of
$$
V(x)=\frac{1}{h_{\alpha_1,\beta_1,\theta}(x)}-\frac{1}{h_{0,\beta,\theta}(x)}
    =\beta_1\alpha_1^\theta\theta
         +(1-\alpha_1)\beta_1\theta\left(\frac{x}{\beta_1}+\alpha_1^{\theta}\right)^{1-\frac1{\theta}}
         -\beta\theta\left(\frac{x}{\beta}\right)^{1-\frac1{\theta}}.
$$
We have $V(0)=\beta_1\alpha_1^{\theta-1}\theta>0$. Moreover,
$$
V(+\infty)=\left\{\begin{array}{lcl}
\displaystyle
\infty\times{\rm sgn}\left((1-\alpha_1)\beta_1^{\frac1\theta}-\beta^{\frac1\theta}\right)
 & \quad & \displaystyle\mbox{if }1-\frac1\theta>0, \\
 & & \\
\displaystyle\beta_1\alpha_1^\theta & \quad & \displaystyle\mbox{if }1-\frac1\theta<0.
\end{array}\right.
$$
Therefore, under our assumptions, $V(+\infty)=+\infty$ for every $\theta>0$. Seeking for extreme points of $V$, we need to solve $V^\prime(x)=0$, which translates into
$$
P(x)=\frac{x}{x+\beta_1\alpha_1^\theta}
=\frac{\beta}{\beta_1}\frac{1}{(1-\alpha_1)^\theta}.
$$
It is easy to verify that $P$ is increasing, $P(0)=0$, $P(+\infty)=1$, and the right hand side of he equation is less or equal than 1, so this equation has exactly one solution, equal to $x_0=\frac{\beta\beta_a\alpha_1^\theta}{\beta_1(1-\alpha_1^\theta)-\beta}$. Assuming \textit{(HR5)}, it follows that $V^\prime(x)\geq 0$, for every $x\geq 0$, hence $V$ remains positive. If assuming \textit{(HR6)}, $V$ has a minimum at $x_0$, and our assumptions mean that $V(x_0)\geq 0$ so, again, we conclude that $V$ stays positive, thus concluding the proof.
\end{proof}

Finally, a characterisation of convex transform order relationships.
\begin{theorem}
\label{thm:GLLclasses}
For the enlarged log-logistic distribution functions (\ref{eq:GLL}) we have that:
    \begin{enumerate}
    \item
    If $\theta\leq\theta_1$ and $\alpha(\theta_1-1)+\alpha_1(1-\theta)\geq0$, then for every $\beta,\,\beta_1>0$, $\cdfgll\leqc\cdfgllalt$.
    \item
    If $\theta\geq\theta_1$ and $\alpha(\theta_1-1)+\alpha_1(1-\theta)\leq0$, then for every $\beta,\,\beta_1>0$, $\cdfgllalt\leqc\cdfgll$.
	\end{enumerate}
\end{theorem}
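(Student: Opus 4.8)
The plan is to reduce the convex transform order to an explicit convexity statement for a composed map and then to control the sign of its second derivative. By Definition~\ref{def:vanZwet}, the relation $\cdfgll\leqc\cdfgllalt$ means precisely that $\psi:=\cdfgllalt^{-1}\circ\cdfgll$ is convex on $[0,\infty)$. First I would substitute the explicit distribution function $\cdfgll$ and quantile function $\cdfgllalt^{-1}$ recorded above. A short computation gives $\frac{1}{1-\cdfgll(x)}=\left(\frac{x}{\beta}+\alpha^\theta\right)^{1/\theta}+1-\alpha$, and hence
$$
\psi(x)=\beta_1\left(\left(\left(\frac{x}{\beta}+\alpha^\theta\right)^{\frac1\theta}+\alpha_1-\alpha\right)^{\theta_1}-\alpha_1^{\theta_1}\right).
$$
Since $\beta_1>0$, the positive scaling and the additive constant $-\beta_1\alpha_1^{\theta_1}$ do not affect convexity, so it suffices to study the convexity of $x\mapsto\bigl((\frac{x}{\beta}+\alpha^\theta)^{1/\theta}+\alpha_1-\alpha\bigr)^{\theta_1}$.

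The key simplification is to express everything through the quantity $u=\left(\frac{x}{\beta}+\alpha^\theta\right)^{1/\theta}$, which is the odds function $\Lambda_{\cdfgll}$ shifted by $\alpha$; as $x$ runs over $[0,\infty)$, $u$ runs over $[\alpha,\infty)$, with $u=\alpha$ at $x=0$. Writing $c=\alpha_1-\alpha$ and computing $\psi''$ via the chain rule, I would factor out the manifestly positive prefactor $\frac{\theta_1}{\beta^2\theta^2}u^{1-2\theta}(u+c)^{\theta_1-2}$ (positive on the interior, since $u>0$ and $u+c\geq\alpha_1\geq0$), after which the sign of $\psi''$ coincides with the sign of the \emph{linear} function
$$
B(u)=(\theta_1-\theta)\,u+(1-\theta)(\alpha_1-\alpha).
$$
The whole question then reduces to deciding when $B(u)\geq0$ for all $u\geq\alpha$.

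For part~1, the hypothesis $\theta\leq\theta_1$ makes the slope $\theta_1-\theta$ nonnegative, so $B$ is nondecreasing and attains its minimum over $[\alpha,\infty)$ at $u=\alpha$. A direct evaluation yields $B(\alpha)=\alpha(\theta_1-1)+\alpha_1(1-\theta)$, which is exactly the quantity assumed nonnegative; hence $B\geq0$ throughout, $\psi$ is convex, and $\cdfgll\leqc\cdfgllalt$. Part~2 then follows by symmetry: swapping the two parameter triples in part~1 shows that $\cdfgllalt\leqc\cdfgll$ holds whenever $\theta_1\leq\theta$ and $\alpha_1(\theta-1)+\alpha(1-\theta_1)\geq0$, and the one-line identity $\alpha_1(\theta-1)+\alpha(1-\theta_1)=-\bigl(\alpha(\theta_1-1)+\alpha_1(1-\theta)\bigr)$ verifies that this is equivalent to the stated hypothesis $\alpha(\theta_1-1)+\alpha_1(1-\theta)\leq0$.

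The main obstacle I anticipate is purely bookkeeping: correctly collecting the chain-rule terms so that the apparently messy $\psi''$ collapses to the linear expression $B(u)$, and confirming that the factored prefactor is genuinely positive. The only genuine subtlety is the boundary behaviour at $x=0$ when $\alpha=0$ or $\alpha_1=0$, where the prefactor may degenerate; this is harmless, since convexity is established on the open interval $(0,\infty)$, where all powers have strictly positive bases, and it extends to $[0,\infty)$ by continuity of $\psi$.
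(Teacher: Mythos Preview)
Your proof is correct and follows essentially the same approach as the paper's: both compute $\psi=\cdfgllalt^{-1}\circ\cdfgll$, reduce $\psi''$ to the linear expression $(\theta_1-\theta)u+(1-\theta)(\alpha_1-\alpha)$ with $u=(x/\beta+\alpha^\theta)^{1/\theta}$, and evaluate at the left endpoint $u=\alpha$ to obtain $\alpha(\theta_1-1)+\alpha_1(1-\theta)$. The only cosmetic differences are that the paper sets $\beta=\beta_1=1$ at the outset via scale invariance of $\leqc$, whereas you carry $\beta$ and factor out $\beta_1$, and you spell out the positive prefactor and the boundary continuity argument that the paper leaves implicit.
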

\begin{proof}
First note that as the $\beta$ is a scale parameter and the convex transform order is invariant with respect to scale parameters, we may assume that $\beta=\beta_1=1$. We need to look at the convexity/concavity of
$$
\psi(x)=K_{\alpha_1,1,\theta_1}^{-1}\circ K_{\alpha,1,\theta}(x)
 =\left(\left(x+\alpha^\theta\right)^{\frac1\theta}+\alpha_1-\alpha\right)^{\theta_1}-\alpha_1^{\theta_1}.
$$
Simple differentiation and simplification show that $\psi^{\prime\prime}(x)\eqs(\theta_1-\theta)\left(x+\alpha^\theta\right)^{\frac1\theta}+(1-\theta)(\alpha_1-\alpha)$. Therefore, $\psi$ is convex if $\theta_1-\theta\geq0$ and $\psi^{\prime\prime}(0)=\alpha(\theta_1-1)+\alpha_1(1-\theta)>0$, and it is concave if both these two inequalities are reversed.
\end{proof}

The following particular cases are now obvious.
\begin{corollary}
\label{cor:GLLclasses}
For the enlarged log-logistic distribution functions (\ref{eq:GLL}) we have that:
    \begin{enumerate}
    \item
    If $\theta \geq 1$, then for every $\alpha\geq 0$, $\mathcal{L}=K_{0,\beta,1}\leqc\cdfgll\leqc K_{0,\beta,\theta}$.
    \item
    If $\theta\leq 1$, then for every $\alpha \geq 0$, $K_{0,\beta,\theta}\leqc\cdfgll\leqc K_{0,\beta,1}=\mathcal{L}$.
	\end{enumerate}
\end{corollary}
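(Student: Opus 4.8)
The plan is to deduce every ordering appearing in both chains directly from Theorem~\ref{thm:GLLclasses}(1), since each chain is simply a pair of consecutive $\leqc$-comparisons and the theorem supplies exactly such comparisons once its two hypotheses, $\theta\leq\theta_1$ and $\alpha(\theta_1-1)+\alpha_1(1-\theta)\geq0$, have been verified. There are therefore four applications in total, and in each one I would just read off the parameter triples of the smaller and the larger distribution and check the two inequalities.

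For part~1, assume $\theta\geq1$. To obtain $\mathcal{L}=K_{0,\beta,1}\leqc\cdfgll$, I would invoke Theorem~\ref{thm:GLLclasses}(1) with the smaller distribution carrying parameters $(0,\beta,1)$ and the larger one $(\alpha,\beta,\theta)$: the condition $\theta\leq\theta_1$ becomes $1\leq\theta$, and the sign condition becomes $0\cdot(\theta-1)+\alpha\cdot(1-1)=0\geq0$. To obtain $\cdfgll\leqc K_{0,\beta,\theta}$, I would apply the same part with parameters $(\alpha,\beta,\theta)$ and $(0,\beta,\theta)$; here $\theta\leq\theta_1$ holds with equality and the sign condition reads $\alpha(\theta-1)\geq0$, which is true because $\alpha\geq0$ and $\theta\geq1$.

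Part~2, with $\theta\leq1$, is entirely symmetric. For $K_{0,\beta,\theta}\leqc\cdfgll$ I would take the smaller parameters $(0,\beta,\theta)$ and the larger ones $(\alpha,\beta,\theta)$, so that the sign condition becomes $\alpha(1-\theta)\geq0$, which holds as $\theta\leq1$ and $\alpha\geq0$; for $\cdfgll\leqc K_{0,\beta,1}=\mathcal{L}$ I would take $(\alpha,\beta,\theta)$ and $(0,\beta,1)$, so that $\theta\leq\theta_1$ reads $\theta\leq1$ and the sign condition collapses to $0\geq0$. Chaining the two comparisons obtained in each part then yields the stated double inequalities.

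The computations here are trivial; the only point deserving attention---and the main place where an error could creep in, rather than a genuine obstacle---is that the expression $\alpha(\theta_1-1)+\alpha_1(1-\theta)$ is \emph{not} symmetric in the two parameter triples, so I would be careful to keep the smaller and larger distributions in their correct slots when matching the corollary's cases to the theorem. Once that assignment is fixed, each verification reduces to the elementary sign checks above, all of which follow immediately from $\alpha\geq0$ together with the standing hypothesis on $\theta$.
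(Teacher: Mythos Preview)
Your proposal is correct and follows exactly the route the paper intends: the paper merely states that the corollary lists ``obvious'' particular cases of Theorem~\ref{thm:GLLclasses}, and you have carried out precisely the four parameter specialisations and sign checks that make this explicit. Your care in tracking which triple plays the role of $(\alpha,\beta,\theta)$ versus $(\alpha_1,\beta_1,\theta_1)$ is appropriate, and all four verifications are correct.
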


\begin{remark}\label{inclusion}
As mentioned above, the $\IOR$ family may be characterised as the class of distributions that are dominated, with respect to the convex transform order, by the standard log-logistic $K_{0,1,1}$ (which is equivalent, for this purpose, to $K_{0,\beta,1}$, for every $\beta>0$). Denote with $D_{\alpha,\beta,\theta}$ the family of distributions that are dominated, with respect to the convex transform order, by the $\cdfgll$ distribution. Then, we have that $\IOR=D_{0,\beta,1}$, for every $\beta>0$. Moreover, the transitivity of the $\leqc$-ordering implies that, for $\theta\geq1$ and $\alpha\geq0$, $\IOR=D_{0,\beta,1}\subset D_{\alpha,\beta,\theta}\subset D_{0,\beta,\theta}$. This inclusion implies that, for this choice of parameters, the IOR class remains nested within this more general family, hence meaning that the requirement that $G\in D_{\alpha,\beta,\theta}$ is less stringent that $G\in\IOR$. As emphasized in \cite{oddspaper}, the $\IOR$ already encompasses several well-known distributions with interesting shape properties, namely, allows heavy tailed distributions or for bathtub shaped hazard rates.
\end{remark}

In Theorem~\ref{thm:GLL-IOR}, we described conditions implying the monotonicity of the odds rate $\lambda_{\nG}$. This monotonicity, following the \cite{oddspaper}, translates into either $\nG\leqc\mathcal{L}=K_{0,1,1}$, equivalent to $\nG\in\IOR$, or $\mathcal{L}=K_{0,1,1}\leqc\nG$, equivalent to $\nG\in\DOR$. We may now describe a more general form of the convex transform relations between the $\nG$ and $\cdfgll$ families of distributions.
\begin{theorem}
\label{thm:conv-gen}
Let $\nG$ be described by (\ref{eq:def2}) 
and $\cdfgllalt$ as in (\ref{eq:GLL}). If $F\in\IOR$ and $\theta,\,\theta_1\geq 1$, then $\nG\leqc\cdfgllalt$. On the other hand, if $F\in\DOR$ and $\theta,\,\theta_1\leq 1$, then $\cdfgllalt\leqc\nG$.
\end{theorem}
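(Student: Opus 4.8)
The plan is to factor the desired convex-transform comparison through the standard log-logistic $\mathcal{L}$, used as a pivot, and then invoke transitivity of $\leqc$. All the substantive ingredients are already available: Theorem~\ref{thm:GLL-IOR} controls the shape of $\nG$ (its membership in $\IOR$ or $\DOR$); the equivalences $F\in\IOR \Leftrightarrow F\leqc\mathcal{L}$ and $F\in\DOR \Leftrightarrow \mathcal{L}\leqc F$ recalled in Section~\ref{sec:prelim} convert those shape statements into convex-transform comparisons against $\mathcal{L}$; and Corollary~\ref{cor:GLLclasses} positions $\cdfgllalt$ relative to $\mathcal{L}$ according to the sign of $\theta_1-1$. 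It then remains only to concatenate the two comparisons.

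For the first assertion, I would argue as follows. Since $F\in\IOR$ and $\theta\geq1$, part~1 of Theorem~\ref{thm:GLL-IOR} yields $\nG\in\IOR$, which via the equivalence $\nG\in\IOR \Leftrightarrow \nG\leqc\mathcal{L}$ gives $\nG\leqc\mathcal{L}$. Separately, since $\theta_1\geq1$, part~1 of Corollary~\ref{cor:GLLclasses} (read with parameters $\alpha_1,\beta_1,\theta_1$) gives $\mathcal{L}=K_{0,\beta_1,1}\leqc\cdfgllalt$. Chaining these, $\nG\leqc\mathcal{L}\leqc\cdfgllalt$, and transitivity of $\leqc$ delivers $\nG\leqc\cdfgllalt$.

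The second assertion is entirely symmetric: under $F\in\DOR$ and $\theta\leq1$, part~2 of Theorem~\ref{thm:GLL-IOR} gives $\nG\in\DOR$, equivalently $\mathcal{L}\leqc\nG$; while $\theta_1\leq1$ together with part~2 of Corollary~\ref{cor:GLLclasses} gives $\cdfgllalt\leqc\mathcal{L}$. Transitivity then yields $\cdfgllalt\leqc\nG$.

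No genuinely new computation is needed; the one point worth stating explicitly is the transitivity of $\leqc$, i.e.\ that a composition of two increasing convex functions is again convex. Concretely, one writes $\cdfgllalt^{-1}\circ\nG = (\cdfgllalt^{-1}\circ\mathcal{L})\circ(\mathcal{L}^{-1}\circ\nG)$, where the inner factor $\mathcal{L}^{-1}\circ\nG$ is convex (this being exactly $\nG\leqc\mathcal{L}$) and the outer factor $\cdfgllalt^{-1}\circ\mathcal{L}$ is convex (this being $\mathcal{L}\leqc\cdfgllalt$), both increasing, so their composition is convex. The only thing to watch is the bookkeeping of inequality directions, since the $\IOR$/$\DOR$ dichotomy and the two parts of Corollary~\ref{cor:GLLclasses} flip orientation together as $\theta$ and $\theta_1$ cross~$1$; this is precisely why the hypotheses pair $\theta,\theta_1\geq1$ with $\IOR$ and $\theta,\theta_1\leq1$ with $\DOR$.
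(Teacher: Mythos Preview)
Your argument is correct, but it differs from the paper's. The paper proceeds by direct computation: after normalising $\beta_1=1$, it writes
\[
\psi(x)=K_{\alpha_1,1,\theta_1}^{-1}\circ\nG(x)
=\left(\beta\left(\left(\alpha+\Lambda_F(x)\right)^\theta-\alpha^\theta\right)+\alpha_1\right)^{\theta_1}-\alpha_1^{\theta_1},
\]
differentiates once, and observes that $\psi'(x)$ is (up to a positive constant) a product of nonnegative increasing factors whenever $F\in\IOR$ and $\theta,\theta_1\geq1$, hence $\psi$ is convex; the $\DOR$ case is symmetric. Your route instead pivots through $\mathcal{L}$: Theorem~\ref{thm:GLL-IOR} gives $\nG\in\IOR$ (resp.\ $\DOR$), hence $\nG\leqc\mathcal{L}$ (resp.\ $\mathcal{L}\leqc\nG$), while Corollary~\ref{cor:GLLclasses} positions $\cdfgllalt$ on the appropriate side of $\mathcal{L}$, and transitivity of $\leqc$ closes the chain. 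Your approach is tidier in that it assembles results already proved and needs no fresh calculation; the paper's explicit formula for $\psi'$ has the compensating virtue of feeding directly into the very next result, Theorem~\ref{thm:disp}, where one must verify $\psi'(0)\geq1$ to obtain the dispersive order.
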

\begin{proof}
Assume that $F\in\IOR$ and $\theta,\,\theta_1\geq1$. Due to the invariance of the convex transform order with respect to scale parameters, we may assume $\beta_1=1$. Hence, we want to prove the convexity of
$$
\psi(x)=K_{\alpha_1,1,\theta_1}^{-1}\circ\nG(x)
=\left(\beta\left(\left(\alpha+\Lambda_F(x)\right)^\theta-\alpha^\theta\right)+\alpha_1\right)^{\theta_1}-\alpha_1^{\theta_1}.
$$
Differentiation shows that
$$\psi^\prime(x)
=\beta\theta\theta_1\left(\beta\left(\left(\alpha+\Lambda_F(x)\right)^\theta-\alpha^\theta\right)+\alpha_1\right)^{\theta_1-1}
     \lambda_F(x)\left(\alpha+\Lambda_F(x)\right)^{\theta-1},
$$
which, under our assumptions, is clearly increasing, so $\psi$ is convex. The second statement is proved analogously.
\end{proof}

\begin{theorem}
\label{thm:disp}
Let $\nG$ be described by (\ref{eq:def2}) (or (\ref{eq:nGbar12}) for a more explicit expression) and $\cdfgllalt$ as in (\ref{eq:GLL}). If $F\in\IOR$, $\theta,\,\theta_1\geq 1$ and $\beta\beta_1\theta\theta_1f(0)\alpha^{\theta-1}\alpha^{\theta_1-1}\geq 1$, then $\nG\leq_{disp}\cdfgllalt$. On the other hand, if $F\in\DOR$, $\theta,\,\theta_1\leq 1$ and $\beta\beta_1\theta\theta_1f(0)\alpha^{\theta-1}\alpha^{\theta_1-1}\leq1$, then $\cdfgllalt\leq_{disp}\nG$.
\end{theorem}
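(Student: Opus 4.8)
The plan is to obtain the dispersive comparison as a direct strengthening of the convex transform relation already proved in Theorem~\ref{thm:conv-gen}, by additionally pinning down the derivative of the associated transformation at the left endpoint of the support. Throughout I assume, as is implicit in the model, that the baseline $F$ is supported on $[0,\infty)$ with $F(0)=0$, so that $\nG$ and every $\cdfgll$ share the common left endpoint $0$.

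First I would recall the working characterisation of the dispersive order. Writing $\psi=\cdfgllalt^{-1}\circ\nG$, the relation $\nG\leq_{disp}\cdfgllalt$ is equivalent to $\psi(x)-x$ being nondecreasing, that is, to $\psi^\prime(x)\geq1$ for every $x\geq0$. Differentiating the identity $\cdfgllalt(\psi(x))=\nG(x)$ gives $\psi^\prime(x)=g_{\alpha,\beta,\theta}(x)/k_{\alpha_1,\beta_1,\theta_1}(\psi(x))$.

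Next, under the hypotheses $F\in\IOR$ and $\theta,\theta_1\geq1$, Theorem~\ref{thm:conv-gen} already yields $\nG\leqc\cdfgllalt$, i.e.\ $\psi$ is convex and hence $\psi^\prime$ is nondecreasing. Consequently it suffices to verify the single boundary inequality $\psi^\prime(0)\geq1$, since monotonicity of $\psi^\prime$ then propagates it to every $x\geq0$. Because $F(0)=0$ forces $\Lambda_F(0)=0$, one has $\nG(0)=0$ and $\psi(0)=\cdfgllalt^{-1}(0)=0$, so $\psi^\prime(0)=g_{\alpha,\beta,\theta}(0)/k_{\alpha_1,\beta_1,\theta_1}(0)$. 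Evaluating the two densities at the origin from their explicit expressions gives $g_{\alpha,\beta,\theta}(0)=\beta\theta\alpha^{\theta-1}f(0)$ and $k_{\alpha_1,\beta_1,\theta_1}(0)=\alpha_1^{1-\theta_1}/(\beta_1\theta_1)$, whence
$$
\psi^\prime(0)=\beta\beta_1\theta\theta_1\,f(0)\,\alpha^{\theta-1}\alpha_1^{\theta_1-1}.
$$
The stated hypothesis is precisely $\psi^\prime(0)\geq1$, which closes the first part.

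For the second statement the argument is symmetric. Under $F\in\DOR$ and $\theta,\theta_1\leq1$, Theorem~\ref{thm:conv-gen} gives $\cdfgllalt\leqc\nG$, so $\widetilde{\psi}=\nG^{-1}\circ\cdfgllalt=\psi^{-1}$ is convex; since $\widetilde{\psi}^\prime(0)=1/\psi^\prime(0)$, the reversed product condition $\beta\beta_1\theta\theta_1 f(0)\alpha^{\theta-1}\alpha_1^{\theta_1-1}\leq1$ yields $\widetilde{\psi}^\prime(0)\geq1$ and hence $\cdfgllalt\leq_{disp}\nG$. I expect the only delicate point to be the endpoint analysis rather than any global estimate: one must ensure $\psi$ is differentiable up to $x=0$ and that both distributions genuinely share the left endpoint $0$, after which the entire dispersive comparison collapses to the clean evaluation of the two densities at the origin and the displayed product inequality, precisely because the convexity supplied by Theorem~\ref{thm:conv-gen} already accounts for all the global behaviour.
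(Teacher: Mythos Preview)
Your proposal is correct and follows essentially the same approach as the paper: both arguments reduce the dispersive comparison to checking that $\psi'(0)\geq1$ (equivalently $\phi'(0)\geq0$ in the paper's notation, where $\phi(x)=\psi(x)-x$), after observing that the convexity of $\psi$ established in Theorem~\ref{thm:conv-gen} makes $\psi'$ nondecreasing. Your version is more explicit in evaluating $g_{\alpha,\beta,\theta}(0)$ and $k_{\alpha_1,\beta_1,\theta_1}(0)$ to identify $\psi'(0)=\beta\beta_1\theta\theta_1 f(0)\alpha^{\theta-1}\alpha_1^{\theta_1-1}$, and you correctly read the second occurrence of $\alpha$ in the hypothesis as $\alpha_1$, which is what the computation actually produces.
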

\begin{proof}
The result follows by studying the monotonicity of the function $\phi(x)=K_{\alpha_1,\beta_1,\theta_1}^{-1}\circ\nG(x)-x$. Observe that if $F\in\IOR$ and $\theta, \theta_1 \geq 1$, $\phi^\prime$ is increasing while the additional assumption ensures that $\phi^\prime(0) \geq 0$, establishing the nonnegativeness of $\phi^\prime$. The second part of the theorem follows in a similar manner.
\end{proof}
\begin{remark}
Notice that $\cdfgllalt(0) = \nG(0) = 0$. Thus, under the same conditions as in Theorem \ref{thm:disp}  we can easily get the respective results for the usual stochastic order by applying Theorem 3.B.13(a) of \cite{shaked2007}.
\end{remark}









\bibliography{ModPropOdds}

\end{document}